\newcommand{\comment}[1]{}
\newcommand{\pagebrk}{\pagebreak[4]} 
\newcommand{\C}{{\mathbb C}}
\newcommand{\R}{{\mathbb R}}
\newcommand{\Q}{{\mathbb Q}}
\newcommand{\T}{{\mathbb T}}
\newcommand{\Z}{{\mathbb Z}}
\newcommand{\Prob}{{\mathbb P}}
\newcommand{\Esp}{{\mathbb E}}
\newfont{\cmbsy}{cmbsy10}
\newfont{\cmmib}{cmmib10}
\newcommand{\Orden}{\mathop{\hbox{\cmbsy O}}}
\newcommand{\FGauss}{\,_{2}\!\,F_1}
\newcommand{\raisecomma}{\raisebox{2pt}{$,$}}
\newcommand{\raisedot}{\raisebox{2pt}{$.$}}
\def\indistHIGH{\,{\buildrel d \over \rightarrow}\,}
\def\indist{\,{\indistHIGH}\,}
\begin{document}
\title*{On the sign of the real part of the Riemann zeta-function}
\titlerunning{Sign of the real part of the Riemann zeta-function}
\author{Juan Arias de Reyna, Richard P. Brent and Jan van de Lune}
\institute{Juan Arias de Reyna \at
Universidad de Sevilla, Facultad de Matem\'aticas,
Apdo.1160,  41080-Sevilla, Spain.
\email{arias@us.es}
\and Richard P. Brent \at 
Mathematical Sciences Institute,
Australian National University, Canberra, ACT 0200, Australia.
\email{alf@rpbrent.com}
\and Jan van de Lune \at
Langebuorren 49, 9074 CH Hallum, The Netherlands.
\email{j.vandelune@hccnet.nl}
}
\maketitle

\vspace*{-60pt}
In fond memory of Alfred Jacobus (Alf) van der Poorten 1942--2010\\[30pt]

\abstract*{
We consider the distribution of $\arg\zeta(\sigma+it)$ on fixed lines
$\sigma > \frac12$, and in particular the density 
\[d(\sigma) = \lim_{T \rightarrow +\infty}
        \frac{1}{2T}
        |\{t \in [-T,+T]: |\arg\zeta(\sigma+it)| > \pi/2\}|\,,\]
and the closely related density
\[d_{-}(\sigma) = \lim_{T \rightarrow +\infty}
        \frac{1}{2T}
        |\{t \in [-T,+T]: \Re\zeta(\sigma+it) < 0\}|\,.\]
Using classical results of Bohr and Jessen, we obtain an explicit
expression for the characteristic function $\psi_\sigma(x)$
associated with $\arg\zeta(\sigma+it)$. We give explicit expressions for
$d(\sigma)$ and $d_{-}(\sigma)$ in terms of $\psi_\sigma(x)$.
Finally, we give a practical algorithm for evaluating
these expressions to obtain accurate numerical values of
$d(\sigma)$ and $d_{-}(\sigma)$. 
}

\abstract{
We consider the distribution of $\arg\zeta(\sigma+it)$ on fixed lines
$\sigma > \frac12$, and in particular the density 
\[d(\sigma) = \lim_{T \rightarrow +\infty}
        \frac{1}{2T}
        |\{t \in [-T,+T]: |\arg\zeta(\sigma+it)| > \pi/2\}|\,,\]
and the closely related density
\[d_{-}(\sigma) = \lim_{T \rightarrow +\infty}
        \frac{1}{2T}
        |\{t \in [-T,+T]: \Re\zeta(\sigma+it) < 0\}|\,.\]
Using classical results of Bohr and Jessen, we obtain an explicit
expression for the characteristic function $\psi_\sigma(x)$
associated with $\arg\zeta(\sigma+it)$. We give explicit expressions for
$d(\sigma)$ and $d_{-}(\sigma)$ in terms of $\psi_\sigma(x)$.
Finally, we give a practical algorithm for evaluating
these expressions to obtain accurate numerical values of
$d(\sigma)$ and $d_{-}(\sigma)$. 
}

\section{Introduction} \label{sec:intro}

Several authors, including 
Edwards~\cite[pg.~121]{Edwards},
Gram~\cite[pg.~304]{Gram1},
Hutchinson~\cite[pg.~58]{Hutchinson},
and Milioto~\cite[\S2]{Milioto},
have observed that the real part
$\Re\zeta(s)$ of the Riemann zeta-function $\zeta(s)$ is ``usually positive''.
This is plausible because the Dirichlet series
$\zeta(s) = 1 + 2^{-s} + 3^{-s} + \cdots$
starts with a positive term, and the other terms
$n^{-s}$ may have positive or negative real part. 
In this paper our aim is to make precise the statement that
$\Re\zeta(s)$ is ``usually positive'' for $\sigma := \Re(s) > \frac12$.

Kalpokas and Steuding~\cite{KS}, assuming the Riemann hypothesis, 
have given a sense in which the statement is also true on the critical line
$\sigma = \frac12$. They showed that the mean value of
the set of real values of $\zeta(\frac12+it)$ exists and is equal to~$1$.

We do not assume the Riemann hypothesis, and our results do not 
appear to imply anything about the existence or non-existence of
zeros of $\zeta(s)$ for $\sigma > \frac12$.

Our results depend on the classical results of 
Bohr and Jessen~\cite{BJ1,BJ2} concerning the
value-distribution of $\zeta(s)$ in the half-plane $\sigma > \frac12$.
Since Bohr and Jessen there have been many further results on the value
distribution of various classes of L-functions.  See, for example,
Joyner~\cite{Joyner}, 
Lamzouri~\cite{Lamzouri0,Lamzouri1,Lamzouri2}, 
Laurin\v{c}ikas~\cite{Laurincikas},
Steuding~\cite{Steuding}, and
Voronin~\cite{Voronin2}. However, for our purposes the results of Bohr and
Jessen are sufficient.

After defining our notation, 
we summarise the relevant results of Bohr and
Jessen in \S\ref{sec:BJ}. 
The densities $d(\sigma)$ and $d_{-}(\sigma)$,
defined in \S\ref{sec:arg}, can be expressed in
terms of the characteristic function $\psi_\sigma(x)$ of a certain random
variable $\Im S$ associated with $\arg\zeta(\sigma+it)$.
We consider $\psi_\sigma$ and a related function $I(b,x)$ in
\S\ref{sec:psi}--\S\ref{sec:asympt-I}.
In Theorem~\ref{thm1} 
we use the results of Bohr and Jessen to obtain an
explicit expression 
for $\psi_\sigma(x)$. 
Theorem~\ref{thm2} relates $\log I(b,x)$ to certain polynomials
$Q_n(x)$ which have non-negative integer coefficients with
interesting congruence properties,
and Theorem~\ref{thm:Ibx} gives an asymptotic expansion of $I(b,x)$
which shows a connection between $I(b,x)$ and the Bessel
function $J_0$. Theorem~\ref{thm:psi_bound} 
shows that $\psi_\sigma(x)$ decays
rapidly as $x \to \infty$.

The explicit expression for $\psi_\sigma$ is an infinite product over
the primes, and converges rather slowly. In \S\ref{sec:psi_algorithm} we
show how the convergence can be accelerated to give a practical algorithm
for computing $\psi_\sigma(x)$ to high accuracy. 

In \S\ref{sec:dsigma} we show how $d(\sigma)$ and $d_{-}(\sigma)$ can be
computed using $\psi_\sigma(x)$, and give the results of numerical
computations in \S\ref{sec:numerics}. Finally, in \S\ref{sec:conclusion} we
comment on how our results might be generalised.

Elliott~\cite{Elliott1} determined the characteristic function
$\Psi_\sigma(x)$ of a limiting distribution associated with a certain sequence 
of $L$-functions. We note that Elliott's $\Psi_\sigma(x)$ is the same
function as our $\psi_\sigma(x)$.  For a possible explanation of this
coincidence, using the concept of \emph{analytic conductor}, we refer
to~\cite[Ch.~5]{IK}. Here we merely note that Elliott's method of proof is
quite different from our proof of Theorem~\ref{thm1}, and applies only to
sequences of $L$-functions $L(s,\chi)$ for which
$\chi$ is a non-principal Dirichlet character.

\subsubsection*{Notation} 

$\Z$, $\Q$, $\R$, and $\C$ denote respectively the integers, rationals, reals
and complex numbers.
The real part of $z \in \C$ is denoted by $\Re z$, 
and the imaginary part by $\Im z$.

When considering $\zeta(s)$ we always have $\sigma := \Re\, s$.
Unless otherwise specified, $\sigma > \frac12$ is fixed.

Consider the open set
$G$ equal to $\C$ with cuts along 
$(-\infty +i\gamma,\beta+i\gamma]$ for each
zero or pole $\beta+i\gamma$ of $\zeta(s)$ with $\beta\ge\frac12$. 
Since $\zeta(s)$ is holomorphic and does not vanish on $G$, we may define
$\log\zeta(s)$ on $G$. We take the branch such that 
$\log\zeta(s)$ is real and positive on $(1,+\infty)$.
On $G$ we define $\arg\zeta(s)$ by
\[
\log\zeta(s) = \log|\zeta(s)| + i\cdot\arg\zeta(s).
\label{eq:arg}
\]

\pagebreak[3]
$P$ is the set of primes, and $p \in P$ is a prime.
When considering a fixed prime $p$ we often use the abbreviations 
$b := p^\sigma$ and $\beta := \arcsin(1/b)$.

$|B|$ or $\lambda(B)$ denotes the Lebesgue measure of a set $B\subset\C$
(or $B\subset\R$).
A set $B\subset\C$ is said to be \emph{Jordan-measurable} if 
$\lambda(\partial B) = 0$, where $\partial B$ is the boundary 
of~$B$.\footnote{A bounded set $B$ is 
Jordan-measurable if and only if for each $\varepsilon>0$ we can find two finite
unions of rectangles with sides parallel to the real and imaginary axes, say
$S$ and $T$, such that $S\subseteq B\subset T$ and
$\lambda(T\smallsetminus S)<\varepsilon$
(see for example 
Halmos~\cite{H}).
}

$\FGauss(a,b;c;z)$ denotes the hypergeometric function
of Gauss, see~\cite{AS,Daalhuis}.

\section{Classical results of Bohr and Jessen} \label{sec:BJ}

In \cite{BJ1,BJ2} Bohr and Jessen study several problems regarding the value 
distribution of the zeta function. In particular, for $\sigma>\frac12$
and a given subset $B \subset \C$,
they consider the limit
\[
\lim_{T\to\infty}
\frac{1}{2T}
{|\{t\in\R: |t| < T,\, \log\zeta(\sigma+it)\in B\}|}.
\]
They prove that the limit exists when $B$ is a rectangle with sides parallel 
to the real and imaginary axes. 

Bohr and Jessen also characterize the limit. In modern terminology, they prove 
\cite[Erster Hauptsatz, pg.~3]{BJ2}
the existence of a probability measure $\Prob_\sigma$, absolutely continuous
with respect to Lebesgue measure, such that for any rectangle $B$ as above the
limit is equal to $\Prob_\sigma(B)$.

Finally, they give a description of the measure $\Prob_\sigma$. To express
it in modern language, consider the unit circle $\T=\{z\in\C: |z|=1\}$ with
the usual probability measure $\mu$ (that is $\frac{1}{2\pi}\,d\theta$ if we
identify $\T$ with the interval $[0,2\pi)$ in the usual way).  Let $P$ be
the set of prime numbers. We may consider
$\Omega:=\T^P$ as a probability space with the product measure $\Prob=\mu^P$.
Each point of $\Omega$ is a sequence $\omega=(z_p)_{p\in P}$, with each
$z_p\in\T$. Thus 
$z_p$ may be considered as a random variable. The random variables $z_p$ 
are independent and uniformly distributed on the unit circle. 

\begin{proposition}
Let $\sigma>\frac12$ and for each prime number $q$ let $z_q$ be the 
random variable defined on $\Omega$ such that $z_q(\omega)=z_q$ when 
$\omega=(z_p)_{p\in P}$. The sum of random variables
\[
S:=-\sum_{p\in P}\log(1-p^{-\sigma} z_p) 
  = \sum_{p\in P}\sum_{k=1}^\infty \frac{1}{k}
p^{-k\sigma} z_p^k
\]
converges almost everywhere, so $S$ is a well defined random variable.
\end{proposition}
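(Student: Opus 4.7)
For each prime $p$ define the random variable $X_p(\omega) := -\log(1 - p^{-\sigma} z_p(\omega))$ using the principal branch of the logarithm; since $p^{-\sigma} < 1$ whenever $\sigma > \tfrac12$ and $p \ge 2$, this is well-defined on all of $\Omega$ and admits the absolutely convergent expansion $X_p = \sum_{k\ge 1} \frac{1}{k} p^{-k\sigma} z_p^k$ that already appears in the statement. My plan is to prove almost sure convergence of $S = \sum_p X_p$ by applying Kolmogorov's convergence theorem for series of independent, centered, square-integrable random variables (used separately on the real and imaginary parts).

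Independence of $\{X_p\}_{p \in P}$ is immediate: each $X_p$ is a measurable function of the single coordinate $z_p$, and under $\Prob = \mu^P$ the coordinates $(z_p)_{p \in P}$ are by construction mutually independent. Centeredness follows from uniformity of $z_p$ on $\T$: one has $\Esp[z_p^k] = 0$ for every $k \ge 1$, and termwise integration of the (uniformly convergent on $\T$) Taylor series yields $\Esp[X_p] = 0$.

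To control variances I would invoke the orthogonality relations $\Esp[z_p^j \overline{z_p^k}] = \delta_{jk}$ on the uniform circle; these give
\[
\Esp[|X_p|^2] \;=\; \sum_{k=1}^{\infty} \frac{p^{-2k\sigma}}{k^2} \;\le\; \frac{p^{-2\sigma}}{1 - p^{-2\sigma}} \;\le\; C\, p^{-2\sigma},
\]
uniformly in $p \ge 2$, for an absolute constant $C$. Since $2\sigma > 1$, summing over primes yields $\sum_p \Esp[|X_p|^2] \le C \sum_p p^{-2\sigma} < \infty$. Kolmogorov's theorem, applied to $\sum_p \Re X_p$ and $\sum_p \Im X_p$ separately, then delivers almost sure convergence of the partial sums $\sum_{p \le N} X_p$ as $N \to \infty$, which is the claim.

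The argument is essentially routine; the only subtle point is branch-of-logarithm bookkeeping, namely that the principal-branch single-prime series used for each $X_p$ be consistent with whatever global branch of $\log \zeta(\sigma + it)$ is eventually matched against $S$. The critical use of the hypothesis $\sigma > \tfrac12$ occurs exactly once, in the variance sum: $2\sigma > 1$ is what makes $\sum_p p^{-2\sigma}$ converge, and the argument would break down at $\sigma = \tfrac12$, reflecting the well-known change in behaviour of $\zeta$ on the critical line.
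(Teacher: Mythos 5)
Your proof is correct and follows essentially the same route as the paper's: independence of the summands from the product structure of $\Prob$, zero mean via $\Esp[z_p^k]=0$, the variance bound $\Esp[|X_p|^2]=\sum_{k\ge1}p^{-2k\sigma}/k^2=\Orden(p^{-2\sigma})$ (which you make more explicit than the paper does), and Kolmogorov's convergence theorem for independent centered square-integrable summands. No substantive differences to report.
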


\begin{proof}
The random variables $Y_p:=-\log(1-p^{-\sigma}z_p)$ are independent. 
The mean value of each $Y_p$ is zero since
\[
\Esp(Y_p)=\frac{1}{2\pi}\int_0^{2\pi} \sum_{k=1}^\infty \frac{1}{k}
p^{-k\sigma} e^{ik\theta}\,d\theta=0.
\]
It can be shown in a similar way that 
$E(|Y_p|^2) \sim p^{-2\sigma}$. 
Thus $\sum_p E(|Y_p|^2)$ converges.
A classical result of probability theory~\cite[Thm.~B, Ch.~IX]{H}
proves the convergence almost everywhere 
of the series for $S$.
\hspace*{\fill}\qed
\end{proof}

\pagebreak[3]
The measure $\Prob_\sigma$ of Bohr and Jessen is the distribution of 
the random variable $S$.
For each Borel set $B\subset \C$, we have 
\[
\Prob_\sigma(B)=\Prob\{\omega\in\Omega: S(\omega)\in B\}.
\]

The main result of Bohr and Jessen is that, for each
rectangle $R$ with sides parallel to the axes,
\begin{equation}\label{eq:main}
\Prob_\sigma(R)=\lim_{T\to\infty}
\frac{1}{2T}
{|\{t\in\R: |t| <T,\, \log\zeta(\sigma+it)\in R\}|}
\end{equation}
and the limit exists.
It is easy to deduce that \eqref{eq:main} is also true for 
each Jordan-measurable subset $R\subset \C$,
and for sets $R$ of the form $\R \times B$, where
$B$ is a Jordan-measurable subset of $\R$.

\section{Some quantities related to the argument of the zeta function}
\label{sec:arg}

Define a measure $\mu_\sigma$ on the Borel sets of $\R$ by 
$\mu_\sigma(B):=\Prob_\sigma(\R\times B)$. 
If we take a Jordan subset $B\subset\R$, 
the main result of Bohr and Jessen 
implies that 
\[
\mu_\sigma(B)=\lim_{T\to\infty}
\frac{1}{2T}
|\{t\in\R: |t| <T,\, \arg\zeta(\sigma+it)\in B\}|.
\]
The measure $\mu_\sigma$ is the distribution function of the random
variable $\Im S$. In fact
\begin{multline*}
\mu_\sigma(B)=\Prob_\sigma(\R\times B)=
\Prob\{\omega\in\Omega : S(\omega)\in\R\times B\}
=\Prob\{\omega\in\Omega : \Im S(\omega)\in B\}.
\end{multline*}

We are interested in the functions
$d(\sigma)$, $d_{+}(\sigma)$, and $d_{-}(\sigma)$ defined by
\[
d(\sigma) :=
\lim_{T\to\infty}
\frac{1}{2T}
{|\{t\in\R: |t|<T,\, |\arg\zeta(\sigma+it)|>\pi/2\}|},
\]
\[
d_+(\sigma) :=
\lim_{T\to\infty}
\frac{1}{2T}
{|\{t\in\R: |t|<T,\, \Re\zeta(\sigma+it)>0\}|},
\]
\[
d_-(\sigma) :=
\lim_{T\to\infty}
\frac{1}{2T}
{|\{t\in\R: |t|<T,\, \Re\zeta(\sigma+it)<0\}|}.
\]
Informally, $d_{+}(\sigma)$ is the probability that $\Re\zeta(\sigma+it)$
is positive; $d_{-}(\sigma)$ is the\linebreak 
probability that
$\Re\zeta(\sigma+it)$ is negative. We show in \S\ref{sec:numerics}
that $d(\sigma)$ is usually a\linebreak 
good approximation to $d_{-}(\sigma)$.
Observe that 
$d(\sigma)=1-\mu_\sigma([-\pi/2,\pi/2])$,\linebreak 
$\;d_+(\sigma)+d_-(\sigma)=1$, and
$d_+(\sigma) = \sum_{k \in \Z}\mu_\sigma(2k\pi-\pi/2,2k\pi+\pi/2)$.

\section{The characteristic function $\psi_\sigma$} \label{sec:psi}

Recall that the {\em characteristic function} $\psi(x)$ of a random variable
$Y$ is defined by the Fourier transform
$\psi(x) := {\rm E}[\exp(ixY)]$.
We omit a factor $2\pi$ in the exponent
to agree with the statistical literature.

\begin{proposition}\label{prop1}
The characteristic function of the random variable $\Im S$ is given by 
\begin{equation}\label{eq:psi_product}
\psi_\sigma(x)=\prod_pI(p^{\sigma}, x),
\end{equation}
where, writing $b := p^\sigma$, $I(b,x)$ is defined by
\begin{equation}\label{defI}
I(b,x) :=
\frac{1}{2\pi}\int_0^{2\pi}\exp\left({-ix \arg(1-b^{-1}e^{i\theta})}\right)\,
d\theta.
\end{equation}
\end{proposition}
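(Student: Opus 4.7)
The plan is to compute the characteristic function $\psi_\sigma(x) = \Esp[\exp(ix\,\Im S)]$ by first truncating the defining series of $S$ to primes $p \le N$, using the independence of the variables $z_p$ to factor the resulting expectation, and then passing to the limit $N \to \infty$ via bounded convergence.

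First, because the series for $S$ in the preceding Proposition converges in $\C$ almost surely, taking imaginary parts term by term gives, almost surely,
\[
\Im S = -\sum_{p \in P} \arg(1 - p^{-\sigma} z_p).
\]
The principal branch of $\arg$ is legitimate here: since $|p^{-\sigma} z_p| = p^{-\sigma} \le 2^{-\sigma} < 1$, we have $\Re(1-p^{-\sigma}z_p) \ge 1 - p^{-\sigma} > 0$, so each summand lies in $(-\pi/2,\pi/2)$. Set $U_N := -\sum_{p \le N} \arg(1-p^{-\sigma} z_p)$, so that $U_N \to \Im S$ a.s.

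By the independence of the family $(z_p)_{p \in P}$,
\[
\Esp[\exp(ix\,U_N)] = \prod_{p \le N} \Esp\bigl[\exp\bigl(-ix\,\arg(1-p^{-\sigma}z_p)\bigr)\bigr].
\]
Since $z_p$ is uniformly distributed on the unit circle, the change of variable $z_p = e^{i\theta}$ with $\theta$ uniform on $[0,2\pi)$ identifies the $p$-th factor with $I(p^\sigma, x)$ as given in~\eqref{defI}. Because $|\exp(ix\,U_N)| = 1$ and $U_N \to \Im S$ a.s., the bounded convergence theorem then yields
\[
\psi_\sigma(x) = \lim_{N\to\infty} \Esp[\exp(ix\,U_N)] = \lim_{N\to\infty} \prod_{p \le N} I(p^\sigma, x),
\]
which is precisely \eqref{eq:psi_product} and simultaneously establishes convergence of the infinite product.

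The only slightly delicate point --- and the main obstacle worth flagging --- is justifying that $\Im$ may be moved inside the series defining $S$ term by term. This rests on the a.s.\ convergence of $\sum_p Y_p$ in $\C$ (and not merely in some weaker sense), which was already established in the proof of the preceding Proposition via the Kolmogorov-type result used there; taking imaginary parts is a continuous linear operation on the partial sums and so preserves the almost sure limit. Everything else is the standard identity ``characteristic function of a finite sum of independent variables equals the product of their characteristic functions'', together with bounded convergence to pass to the infinite sum.
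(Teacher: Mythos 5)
Your proof is correct and follows essentially the same route as the paper: write $\Im S$ as the sum of $-\arg(1-p^{-\sigma}z_p)$, factor the expectation using independence, and identify each factor with $I(p^\sigma,x)$ via the uniform distribution of $z_p$ on the circle. The only difference is that you make the passage to the infinite product explicit (truncation at $p\le N$ plus bounded convergence), a step the paper's proof silently absorbs into ``the integral of the product is the product of the integrals.''
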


\begin{proof}
By definition
\[
\psi_\sigma(x)=\int_{\Omega}\exp\left({i x \Im S(\omega)}\right)\,d\omega=
\int_{\Omega}\prod_p \exp\left({-ix \arg(1-p^{-\sigma}z_p)}\right)\,d\omega.
\]
By independence the integral of the product is the product of the integrals,
so
\[
\psi_\sigma(x)=\prod_p\int_{\Omega} 
\exp\left({-ix \arg(1-p^{-\sigma}z_p)}\right)\,d\omega.
\]
Each random variable $z_p$ is distributed as $e^{i\theta}$ 
on the unit circle, so
\[
\psi_\sigma(x)=\prod_p\frac{1}{2\pi}\int_0^{2\pi}
\exp\left({-ix \arg(1-p^{-\sigma}e^{i\theta})}\right)\,d\theta
=\prod_pI(p^{\sigma},x).
\]
\vspace*{-35pt}

\hspace*{\fill}\qed
\vspace*{10pt}
\end{proof}

\section{The function $I(b,x)$} \label{sec:I}

In this section we study the function $I(b,x)$ defined by~\eqref{defI}.
It is easy to see from~\eqref{defI} that $I(b,x)$ is an even function of
$x$. Hence, from~\eqref{eq:psi_product}, the same is true for $\psi_\sigma(x)$.

\pagebrk
\begin{samepage}	
\begin{proposition}\label{Firstformulas}
Let $b>1$ and $\beta=\arcsin(b^{-1})$.
Then
\[
\begin{split}
I(b,x)&=
\frac{1}{\pi}
\int_{0}^{\pi}\cos\Bigl(x\arctan\frac{\sin t}{b-\cos t}\Bigr)\,dt\\
&=\frac{2b}{\pi}\int_{0}^{\beta}
\frac{\cos(xt)\cos t}{\sqrt{1-b^2\sin^2t}}\,dt
=\frac{2}{\pi}\int_{0}^{1}\cos\Bigl(x \arcsin\frac{t}{b}\Bigr)
\frac{dt}{\sqrt{1-t^2}}\,\raisedot
\end{split}
\]
\end{proposition}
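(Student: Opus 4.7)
The plan is to start from the definition \eqref{defI} and perform two successive changes of variable, namely $u = \arctan(\sin\theta/(b-\cos\theta))$ and then $t = b\sin u$. First I would observe that because $b>1$, the complex number $1 - b^{-1}e^{i\theta}$ has positive real part, so its principal argument is well-defined and continuous in $\theta$, and a direct computation gives
\[
\arg(1-b^{-1}e^{i\theta}) = -\arctan\frac{\sin\theta}{b-\cos\theta}.
\]
Call this quantity $-\phi(\theta)$. Since $\phi$ is odd and $2\pi$-periodic, $\cos(x\phi(\theta))$ is even about $\theta=0$ and about $\theta=\pi$, and the imaginary part in \eqref{defI} vanishes. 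Restricting to $[0,\pi]$ and doubling yields the first representation.

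Next I would change variables to $u=\phi(\theta)$. The key subtlety, which I expect to be the main obstacle, is that $\phi$ is not monotone on $[0,\pi]$: a short calculation of $d\phi/d\theta$ shows that $\phi$ increases from $0$ to a maximum, then decreases back to $0$; the maximum is attained at $\theta_\ast=\arccos(1/b)$ and equals $\beta=\arcsin(1/b)$ (this is most cleanly verified from the identity $b\sin u = \sin(\theta+u)$, obtained by expanding $\tan u = \sin\theta/(b-\cos\theta)$). I therefore split $[0,\pi]$ at $\theta_\ast$. On $[0,\theta_\ast]$ we have $\theta+u=\arcsin(b\sin u)$, and on $[\theta_\ast,\pi]$ we have $\theta+u=\pi-\arcsin(b\sin u)$; differentiating gives
\[
d\theta = \Bigl(\pm\frac{b\cos u}{\sqrt{1-b^2\sin^2 u}} - 1\Bigr)\,du
\]
with the plus sign on the first piece and the minus sign on the second. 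Substituting into the first representation and adding the two pieces causes the $\pm 1$ terms to cancel and the two $\cos u/\sqrt{1-b^2\sin^2 u}$ terms to reinforce, producing the factor $2b\cos u/\sqrt{1-b^2\sin^2 u}$ and yielding the second formula.

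Finally, I would set $t=b\sin u$, so that $dt=b\cos u\,du$, $\sqrt{1-b^2\sin^2u}=\sqrt{1-t^2}$, $u=\arcsin(t/b)$, and the limits $u\in[0,\beta]$ become $t\in[0,1]$ since $b\sin\beta=1$. The factor $b$ cancels and we obtain the third representation. No further technicalities are required, since all integrands are continuous on the closed intervals involved.
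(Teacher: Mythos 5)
Your proof is correct and follows essentially the same route as the paper: reduce to $[0,\pi]$ by symmetry, split at $\arccos(1/b)$ where $\arctan(\sin\theta/(b-\cos\theta))$ attains its maximum $\beta$, change variables $u=\phi(\theta)$ on each monotone piece (your inversion via $b\sin u=\sin(\theta+u)$ is a cleaner way to carry out the paper's ``after some simplification''), and finally substitute $t=b\sin u$. The only quibble is your closing remark: the integrands in the second and third representations have (integrable) singularities at the upper endpoints, so they are improper integrals rather than integrals of continuous functions on closed intervals, but this does not affect the validity of the argument.
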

\end{samepage}

\begin{proof}
By elementary trigonometry we find  
\begin{equation}\label{trig}
\arg(1-b^{-1}e^{it})= -\arctan\frac{\sin t}{b-\cos t}\raisedot
\end{equation}
Substituting in \eqref{defI} gives
\[
I(b,x)=\frac{1}{2\pi}\int_0^{2\pi}
\exp\Bigl(ix\arctan\frac{\sin t}{b-\cos t}\Bigr)\,dt
=\frac{1}{\pi}\int_0^{\pi}\cos\Bigl(x\arctan\frac{\sin t}{b-\cos t}\Bigr)\,dt.
\]

To obtain the second representation, note that
$\arctan({\sin t}/({b-\cos t}))$ is increasing on 
the interval $[0,\gamma]$ and decreasing on $[\gamma,\pi]$,
where $\gamma=\arccos b^{-1}$.
We split the integral on $[0,\pi]$ into integrals on 
$[0,\gamma]$ and  $[\gamma,\pi]$. In each of the resulting 
integrals we change variables, putting $u:=\arctan({\sin t}/({b-\cos t}))$.
Then
\[
t=\arccos\left(b\sin^2 u\pm\cos u \sqrt{1-b^2\sin^2u}\;)\right)\,,
\]
where the sign is ``$+$'' on the first interval
and ``$-$'' on the second interval. After some simplification,
the second representation follows. The third representation follows
by the change of variables $t \mapsto \arcsin (t/b)$.
\hspace*{\fill}\qed
\end{proof}

\begin{lemma}\label{lemma3}
For $|t|<1$ and all $x\in\C$,
\begin{equation}\label{cosarcsin}
\cos(2x\arcsin t)=
\FGauss(-x,x;{\textstyle\frac12};t^2)=
1+\sum_{n=1}^\infty \frac{(2t)^{2n}}{(2n)!}\prod_{j=0}^{n-1}(j^2-x^2).
\end{equation}
\end{lemma}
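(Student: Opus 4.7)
The plan is to prove the identity by showing both sides of \eqref{cosarcsin} satisfy the same second-order linear ODE with matching initial data, then deducing the explicit series by expanding the Gauss hypergeometric series and simplifying the Pochhammer symbols.

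First I would set $y(t) := \cos(2x\arcsin t)$ and compute its derivatives. Writing $u = \arcsin t$ so that $y = \cos(2xu)$, the identity $(1-t^2)(y')^2 = 4x^2(1 - y^2)$ follows immediately from $du/dt = 1/\sqrt{1-t^2}$ and $\sin^2 + \cos^2 = 1$. Differentiating once in $t$ and dividing by $2y'$ yields the linear ODE
\[
(1-t^2)y'' - t y' + 4x^2 y = 0.
\]
Since $y$ is even in $t$, I set $s = t^2$ and write $y(t) = f(s)$; a direct substitution transforms the equation into
\[
s(1-s)f''(s) + \bigl(\tfrac12 - s\bigr)f'(s) + x^2 f(s) = 0,
\]
which is precisely the Gauss hypergeometric equation with parameters $a = -x$, $b = x$, $c = \tfrac12$ (so that $ab = -x^2$ and $a+b+1 = 1$).

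Next I would invoke the standard uniqueness statement for this equation: the indicial exponents at $s=0$ are $0$ and $1-c = \tfrac12$, so there is a unique solution analytic at $s=0$ with $f(0)=1$, namely $\FGauss(-x,x;\tfrac12;s)$. Since $y(t)$ is real-analytic in $t$ at the origin with only even powers, $f(s)$ is analytic at $s=0$; and $y(0) = \cos 0 = 1$ fixes $f(0)=1$. This establishes the first equality $\cos(2x\arcsin t) = \FGauss(-x,x;\tfrac12;t^2)$ for $|t|<1$ (for every $x\in\C$, by analyticity of both sides in $x$).

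For the explicit series form, I would expand
\[
\FGauss(-x,x;\tfrac12;t^2) = \sum_{n=0}^{\infty} \frac{(-x)_n (x)_n}{(\tfrac12)_n\, n!}\, t^{2n},
\]
then use $(-x)_n(x)_n = \prod_{j=0}^{n-1}(j-x)(j+x) = \prod_{j=0}^{n-1}(j^2-x^2)$ and the standard evaluation $(\tfrac12)_n = (2n)!/(4^n n!)$, which collapses the prefactor to $4^n/(2n)! = 2^{2n}/(2n)!$. The result is exactly the claimed series, with the $n=0$ empty product giving the leading $1$.

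The only place requiring mild care is the justification that the power-series solution to the transformed ODE is unique up to a scalar; this is standard but should be mentioned because $s=0$ is a regular singular point and the second Frobenius solution (with exponent $\tfrac12$) is non-analytic, while $y(t)$ is visibly an even analytic function of $t$, hence of $s = t^2$. No other step is delicate: the ODE derivation and the Pochhammer simplification are purely mechanical.
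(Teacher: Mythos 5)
Your proposal is correct, but it takes a different route from the paper's actual proof. The paper disposes of the first equality in one line by citing the known identity $\cos(2az)=\FGauss(-a,a;\frac12;\sin^2 z)$ (Abramowitz--Stegun 15.1.17, or DLMF 15.4.12) and substituting $a=x$, $z=\arcsin t$; the second equality then follows, as in your argument, from the definition of $\FGauss$. What you have written out is essentially the self-contained alternative that the authors themselves sketch in the remark immediately following the lemma, namely that $f(t)=\cos(2x\arcsin t)$ satisfies $(1-t^2)f''-tf'+4x^2f=0$. Your version fills in the details they omit: the reduction $s=t^2$ to the hypergeometric equation with $a=-x$, $b=x$, $c=\frac12$, the uniqueness of the solution analytic at $s=0$ with $f(0)=1$ (the second Frobenius exponent being $\frac12$), and the Pochhammer computation $(\frac12)_n=(2n)!/(4^n n!)$ yielding the factor $(2t)^{2n}/(2n)!$. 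All of these steps check out. The only point worth a sentence of care in a fully rigorous write-up is the division by $2y'$ when passing from the first-order relation $(1-t^2)(y')^2=4x^2(1-y^2)$ to the linear ODE: for $x\ne 0$ the zeros of $y'$ are isolated so the linear equation holds by continuity, and for $x=0$ the identity is trivially $1=1$. The trade-off is the usual one: the paper's proof is shorter but rests on a handbook citation, while yours is longer but self-contained and makes transparent why the hypergeometric parameters $(-x,x;\frac12)$ appear.
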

\begin{proof}
In~\cite[eqn.~15.1.17]{AS} (also \cite[eqn.~15.4.12]{Daalhuis})
we find the identity
\[
\cos(2az) = \FGauss(-a,a;{\textstyle\frac12};\sin^2 z).
\]
Replacing $a$ by $x$ and $z$ by $\arcsin t$, we get the first half
of~\eqref{cosarcsin}. The second half follows from the definition
of the hypergeometric function.
\hspace*{\fill}\qed
\end{proof}
\begin{remark}
An independent proof uses the fact that $f(t):=\cos(2x\arcsin t)$ satisfies
the differential equation $(1-t^2) f''(t)-t f'(t)+4x^2f(t)=0$,
where primes denote differentiation with respect to $t$.
\end{remark}
\begin{remark}
When $x\in \Z$, the series~\eqref{cosarcsin} reduces to a polynomial.
\end{remark}

\pagebreak[3]
\begin{proposition}\label{prop4}
For $b>1$ we have
\[
I(b,2x)=\FGauss(-x,x;1;b^{-2})
=1+\sum_{n=1}^\infty \frac{1}{b^{2n} n!^2}\prod_{j=0}^{n-1}(j^2-x^2).
\]
\end{proposition}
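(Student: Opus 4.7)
The plan is to start from the third representation in Proposition~\ref{Firstformulas}, namely
\[
I(b,x) = \frac{2}{\pi}\int_0^1 \cos\!\Bigl(x\arcsin\tfrac{t}{b}\Bigr)\frac{dt}{\sqrt{1-t^2}},
\]
and replace $x$ by $2x$ so that Lemma~\ref{lemma3} applies directly to the integrand. Since $b>1$, for every $t\in[0,1]$ we have $|t/b|\le 1/b<1$, which is exactly the range of validity of~\eqref{cosarcsin}. So I would substitute $t/b$ into~\eqref{cosarcsin} to expand
\[
\cos\!\Bigl(2x\arcsin\tfrac{t}{b}\Bigr)=1+\sum_{n=1}^\infty \frac{(2t/b)^{2n}}{(2n)!}\prod_{j=0}^{n-1}(j^2-x^2).
\]

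Next I would justify interchanging the sum and the integral. For fixed $x$ the series converges uniformly in $t\in[0,1]$ because $|t/b|\le 1/b<1$, and the bound is independent of $t$, so it can be multiplied by the integrable weight $1/\sqrt{1-t^2}$ and integrated term by term. This reduces the problem to evaluating the Wallis-type integrals
\[
W_n := \int_0^1 \frac{t^{2n}}{\sqrt{1-t^2}}\,dt = \int_0^{\pi/2}\sin^{2n}u\,du = \frac{\pi}{2}\cdot\frac{(2n)!}{4^n n!^2},
\]
via the substitution $t=\sin u$. The $n=0$ term contributes $(2/\pi)\cdot(\pi/2)=1$, and for $n\ge 1$ the factor $(2/b)^{2n}/(2n)!$ times $(2/\pi)W_n$ collapses to $1/(b^{2n}n!^2)$, giving exactly
\[
I(b,2x) = 1+\sum_{n=1}^\infty \frac{1}{b^{2n}n!^2}\prod_{j=0}^{n-1}(j^2-x^2).
\]

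Finally I would identify this series with $\FGauss(-x,x;1;b^{-2})$ using the Pochhammer symbol: $(1)_n = n!$, and
\[
(-x)_n(x)_n = \prod_{j=0}^{n-1}(-x+j)(x+j) = \prod_{j=0}^{n-1}(j^2-x^2),
\]
so the series matches the hypergeometric definition term by term with $z = b^{-2}$. There is no real obstacle here; the only point that needs a line of care is the termwise integration, and this is immediate from the uniform bound $|t/b|\le 1/b<1$ together with the fact that $\int_0^1 dt/\sqrt{1-t^2}$ is finite.
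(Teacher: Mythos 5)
Your proposal is correct and follows exactly the paper's own route: expand $\cos\bigl(2x\arcsin(t/b)\bigr)$ in the third integral representation of Proposition~\ref{Firstformulas} via Lemma~\ref{lemma3}, integrate term by term using the Beta/Wallis integral $\frac{2}{\pi}\int_0^1 t^{2n}(1-t^2)^{-1/2}\,dt = (2n)!/(n!^2 2^{2n})$, and match the resulting series to $\FGauss(-x,x;1;b^{-2})$ by the definition of the hypergeometric series. The only difference is that you spell out the justification of the interchange of sum and integral, which the paper leaves implicit.
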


\begin{proof}
From Proposition \ref{Firstformulas}, we have
\[
I(b,2x)=\frac{2}{\pi}\int_{0}^{1}\cos\Bigl(2x \arcsin\frac{t}{b}\Bigr)\,
\frac{dt}{\sqrt{1-t^2}}\,\raisedot
\]
The expression of $I(b,2x)$ as a sum follows from Lemma \ref{lemma3},
using a well-known integral for the Beta function $B(n+\frac12,\frac12)$:
\[
\frac{2}{\pi}\int_{0}^{1}\frac{t^{2n}\,dt}{\sqrt{1-t^2}}=
\frac{1}{\pi}B\left(n+{\textstyle\frac12},{\textstyle\frac12}\right)=
\frac{(2n)!}{n!^2 2^{2n}}\,\raisedot
\]
The identification of $I(b,2x)$ as $\FGauss(-x,x;1;b^{-2})$ then
follows from the definition of the hypergeometric function $\FGauss$.
\hspace*{\fill}\qed
\end{proof}

\begin{corollary} \label{cor:rational_Ibx}
If $x \in \Z$, $b^2 \in \Q$ and $b>1$, then $I(b,2x) \in \Q$.
\end{corollary}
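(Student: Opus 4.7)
The plan is to read off the corollary directly from the series in Proposition~\ref{prop4}. By that proposition,
\[
I(b,2x) = 1 + \sum_{n=1}^\infty \frac{1}{b^{2n} n!^2}\prod_{j=0}^{n-1}(j^2-x^2),
\]
so I would analyze the summand under the hypothesis $x \in \Z$.

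First I would observe that when $x \in \Z$, writing $m = |x|$, the product $\prod_{j=0}^{n-1}(j^2 - x^2)$ vanishes as soon as $n \ge m+1$: indeed, for such $n$ the index $j=m$ lies in the range $\{0,1,\ldots,n-1\}$, and the corresponding factor $m^2 - x^2 = 0$. Hence the infinite series collapses to the finite sum
\[
I(b,2x) = 1 + \sum_{n=1}^{m}\frac{1}{b^{2n} n!^2}\prod_{j=0}^{n-1}(j^2 - x^2).
\]

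Next I would check that each term in this finite sum is rational. For every $n$ with $1 \le n \le m$, the product $\prod_{j=0}^{n-1}(j^2 - x^2)$ is a product of integers, hence an integer; $n!^2$ is a nonzero integer; and $b^{2n} = (b^2)^n \in \Q$ by hypothesis on $b^2$, with $b^{2n} > 1$ since $b > 1$, so in particular $b^{2n}\neq 0$. Therefore each term belongs to $\Q$, and the finite sum $I(b,2x)$ is a rational number.

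There is no real obstacle here; the only point to notice is that the integrality of $x$ forces the hypergeometric series $\FGauss(-x,x;1;b^{-2})$ to terminate, after which rationality is immediate from the algebraic hypothesis on $b^2$. An even shorter version of the argument is to say that $\FGauss(-x,x;1;z)$ is a polynomial in $z$ of degree $|x|$ with rational coefficients (since its coefficients are the ratios $\prod_{j=0}^{n-1}(j^2-x^2)/n!^2 \in \Q$), evaluated at $z = b^{-2} \in \Q$.
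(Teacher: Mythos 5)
Your proof is correct, and it is actually more direct than the one in the paper. The paper first reduces to $x \ge 0$ by evenness, then applies Euler's transformation to write $I(b,2x) = (1-b^{-2})^x\,\FGauss(-x,1-x;1;1/(1-b^2))$ and argues that the transformed series terminates. You instead observe that the original series of Proposition~\ref{prop4} already terminates: for integer $x$ the factor $j^2-x^2$ vanishes at $j=|x|$, so $\prod_{j=0}^{n-1}(j^2-x^2)=0$ for all $n\ge|x|+1$ (equivalently, the parameter $-|x|$ in $\FGauss(-x,x;1;b^{-2})$ is a nonpositive integer, so the hypergeometric series is a polynomial in $b^{-2}$ with rational coefficients). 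This makes the detour through Euler's transformation unnecessary and also dispenses with the explicit reduction to $x\ge 0$, since $|x|$ handles both signs; what the paper's route buys is essentially nothing extra for this corollary beyond citing a standard identity. Both arguments are valid; yours is the shorter path to the same conclusion.
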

\begin{proof}
Since $I(b,2x)$ is even, we can assume that $x \ge 0$.
Applying Euler's\linebreak 
{transformation}~\cite[(15.3.4)]{AS} to the hypergeometric 
representation of Proposition~\ref{prop4}, we
obtain $I(b,2x) = (1-b^{-2})^x\FGauss(-x,1-x;1;1/(1-b^2))$,
but the series for\linebreak 
$\FGauss(-x,1-x;1;z)$ terminates, so is rational for $z \in \Q$.
\hspace*{\fill}\qed
\end{proof}

We can now prove our first main result, which gives 
an explicit expression for the
characteristic function $\psi_\sigma$ defined in
\S\ref{sec:BJ}--\S\ref{sec:psi}.
\begin{theorem}\label{thm1}
For $\sigma > \frac12$,
the characteristic function $\psi_\sigma$ of Proposition~\ref{prop1}
is the entire function given by the convergent infinite product
\begin{equation}
\label{eq:psi_prod}
\psi_\sigma(2x)=\prod_p\Bigl(1+\sum_{n=1}^\infty\frac{1}{n!^2}
\prod_{j=0}^{n-1}(j^2-x^2)\cdot \frac{1}{p^{2n\sigma}}\Bigr).
\end{equation}
\end{theorem}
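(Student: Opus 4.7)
The plan is to combine Proposition~\ref{prop1} with Proposition~\ref{prop4} and then justify convergence and entirety. Proposition~\ref{prop1} gives $\psi_\sigma(y)=\prod_p I(p^\sigma,y)$ for real $y$; setting $y=2x$ and substituting the series from Proposition~\ref{prop4} into each factor produces the displayed formula. What requires real work is (i) interpreting the right-hand side as an entire function of the complex variable $x$, and (ii) showing that this entire function agrees with the characteristic function $\psi_\sigma(2x)$ on $\R$.

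For (i), I would first note that each factor is entire in $x$: for fixed $b=p^\sigma>1$, the consecutive-term ratio of the series in Proposition~\ref{prop4} is $(n^2-x^2)/((n+1)^2 b^2)$, which tends to $b^{-2}<1$, so the series converges for every $x\in\C$ and its sum $I(b,2x)$ is entire. Next I would establish a locally uniform estimate of each factor near $1$. Fix $R>0$ and restrict to $|x|\le R$. The elementary inequality
\[
\Bigl|\prod_{j=0}^{n-1}(j^2-x^2)\Bigr|\le \prod_{j=0}^{n-1}(j^2+R^2)\le C_R\cdot ((n-1)!)^2,
\qquad C_R := R^2\prod_{j=1}^\infty\Bigl(1+\frac{R^2}{j^2}\Bigr),
\]
(with $C_R<\infty$ because $\sum R^2/j^2<\infty$) bounds the $n$-th term of the series by $C_R/(n^2 p^{2n\sigma})$. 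Summing over $n\ge 2$ and isolating the $n=1$ term yields the uniform expansion
\[
I(p^\sigma,2x)=1-\frac{x^2}{p^{2\sigma}}+O_R(p^{-4\sigma})
\qquad (|x|\le R).
\]

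Since $\sigma>\tfrac12$ implies $\sum_p p^{-2\sigma}<\infty$, this estimate shows that $\sum_p \bigl(I(p^\sigma,2x)-1\bigr)$ converges absolutely and uniformly on every compact subset of $\C$. By the standard Weierstrass criterion, the infinite product therefore converges locally uniformly to an entire function of $x$. Because this entire function agrees on the real axis with $\psi_\sigma(2x)$ (by Proposition~\ref{prop1} and Proposition~\ref{prop4} applied pointwise for real $x$), the identity extends to all $x\in\C$ by the identity theorem, and the theorem follows. The main obstacle is the uniform tail bound on the hypergeometric series for $I(p^\sigma,2x)$: all the rest is bookkeeping, but that estimate is what upgrades the pointwise formula of Proposition~\ref{prop1} to a globally valid entire identity.
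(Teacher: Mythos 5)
Your argument is correct and takes essentially the same route as the paper: combine Proposition~\ref{prop1} with Proposition~\ref{prop4}, then deduce locally uniform convergence of the product from $\sum_p p^{-2\sigma}<\infty$. The paper's proof is a two-line version of yours, and your uniform tail estimate $I(p^\sigma,2x)=1-x^2p^{-2\sigma}+O_R(p^{-4\sigma})$ on $|x|\le R$ is precisely the detail it leaves implicit.
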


\begin{proof}
The identity~\eqref{eq:psi_prod} follows from 
Proposition~\ref{prop1} and Proposition \ref{prop4}.
Since $\sum p^{-2\sigma}$ converges,
the infinite product~\eqref{eq:psi_prod} 
converges for all $x\in \C$. 
\hspace*{\fill}\qed
\end{proof}

\section{The function $\log I(b,x)$} \label{sec:logI}

The explicit formula for $\psi_\sigma$ given by Theorem~\ref{thm1}
is not suitable for numerical computation because the infinite product
over primes converges too slowly.  In \S\ref{sec:psi_algorithm} we show how
this difficulty can be overcome.  First we need to consider the
function $\log I(b,x)$.

\begin{theorem}\label{thm2}
Suppose that $b > \max(1, |x|)$.
There exist even polynomials $Q_n(x)$ of degree $2n$ with $Q_n(0)=0$ and  
nonnegative integer coefficients $q_{n,k}$ such that 
\begin{equation} \label{eq_I(b,2x)}
\log I(b,2x)=-\sum_{n=1}^\infty\frac{Q_n(x)}{n!^2}\frac{1}{b^{2n}}=
-\sum_{n=1}^\infty \sum_{k=1}^n
\frac{q_{n,k}}{n!^2}\frac{x^{2k}}{b^{2n}}\,\raisedot
\end{equation}
The polynomials $Q_n(x)$ are determined by the recurrence
\begin{equation}\label{eq:Q_rec}
Q_1(x)=x^2,\quad Q_{n+1}(x) = (n!)^2x^2 +
\sum_{j=0}^{n-1}{n \choose j}{n \choose j+1}Q_{j+1}(x)Q_{n-j}(x)\,.
\end{equation}
Also, the polynomials $Q_n(x)$ satisfy
\begin{equation}\label{eq:Q_ineq}
|Q_n(x)| \le n!(n-1)!\,\max(1,|x|)^{2n}.
\end{equation}
\end{theorem}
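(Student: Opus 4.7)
The plan is to work in the single variable $y := b^{-2}$ and exploit the fact, from Proposition~\ref{prop4}, that
\[
F(y) := I(b,2x) = \FGauss(-x,x;1;y)
\]
is a Gauss hypergeometric function. As such, $F$ satisfies the hypergeometric ODE
\[
y(1-y)F''(y) + (1-y)F'(y) + x^2 F(y) = 0.
\]
Since $F(0)=1$, the branch $G(y):=\log F(y)$ is analytic in a neighborhood of $y=0$. Substituting $F'=FG'$ and $F''=F(G''+(G')^2)$ and dividing through by $F$ converts the linear equation into the Riccati-type identity
\[
y(1-y)G''(y) + y(1-y)(G'(y))^2 + (1-y)G'(y) + x^2 = 0.
\]

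The next step is to write $G(y) = -\sum_{n\ge 1} c_n y^n$ with $c_n := Q_n(x)/n!^2$ (this defines the polynomials $Q_n(x)$, since $G$ is a power series in $y$ whose coefficients depend polynomially on $x$) and to extract the coefficient of $y^k$ in the Riccati identity. The constant term yields $c_1=x^2$, i.e.\ $Q_1(x)=x^2$. For $k\ge 1$ one obtains the raw two-sum recurrence
\[
(k+1)^2 c_{k+1} = k^2 c_k + \sum_{m=1}^{k} m(k+1-m)\,c_m c_{k+1-m} - \sum_{m=1}^{k-1} m(k-m)\,c_m c_{k-m}.
\]
Setting $n=k$, multiplying by $(n!)^2$, and using the algebraic identity
\[
\frac{(n!)^2\, m(n+1-m)}{(m!)^2\,((n+1-m)!)^2}=\binom{n}{m-1}\binom{n}{m}
\]
(together with $m\binom{n}{m}=n\binom{n-1}{m-1}$ and $(n-m)\binom{n}{m}=n\binom{n-1}{m}$ for the second sum), the first sum becomes $\sum_{j=0}^{n-1}\binom{n}{j}\binom{n}{j+1}Q_{j+1}Q_{n-j}$, while the second sum transforms into
\[
n^2\sum_{j=0}^{n-2}\binom{n-1}{j}\binom{n-1}{j+1}Q_{j+1}Q_{n-1-j}.
\]
By the inductive hypothesis (the stated recurrence applied at index $n-1$), the last expression equals $n^2(Q_n-((n-1)!)^2 x^2)=n^2 Q_n-(n!)^2 x^2$. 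After substitution, the $n^2 Q_n$ terms cancel and the stated recurrence drops out; the base case $n=1$ checks directly against the definition $Q_2=4c_2$.

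The remaining properties of $Q_n(x)$ follow by routine induction from this clean recurrence. Evenness, vanishing at $x=0$, degree exactly $2n$, and nonnegative integer coefficients are all preserved by $(n!)^2 x^2$ and by each $\binom{n}{j}\binom{n}{j+1}Q_{j+1}(x)Q_{n-j}(x)$, with no cancellation of the top degree because every summand in the recurrence has a positive leading coefficient. For the bound, set $M:=\max(1,|x|)$ and assume inductively that $|Q_k(x)|\le k!\,(k-1)!\,M^{2k}$ for all $k\le n$; the recurrence then gives
\[
|Q_{n+1}(x)|\le M^{2(n+1)}\Bigl[(n!)^2 + \sum_{j=0}^{n-1}\binom{n}{j}\binom{n}{j+1}(j+1)!\,j!\,(n-j)!\,(n-j-1)!\Bigr].
\]
Expanding the binomial coefficients shows that each summand of the inner sum collapses to exactly $(n!)^2$, so the bracket equals $(n+1)(n!)^2$, whence $|Q_{n+1}(x)|\le (n+1)!\,n!\,M^{2(n+1)}$, closing the induction.

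The main obstacle, in my view, is the step from the raw ODE-derived recurrence to the clean form stated in the theorem: the Riccati equation naturally produces two distinct convolution sums (indexed by $m+l=k+1$ and $m+l=k$), and it is not at first obvious that the second can be absorbed via the inductive hypothesis into the single term $(n!)^2 x^2$. Once this observation is made, everything else is bookkeeping, and the bound on $|Q_n|$ furnishes for free the convergence of $\sum Q_n(x)/(n!^2\,b^{2n})$ precisely on the region $b>\max(1,|x|)$, where the $n$-th term is geometrically bounded by $M^{2n}/(nb^{2n})$.
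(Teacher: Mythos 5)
Your proof is correct and follows essentially the same route as the paper: both pass to the variable $y=b^{-2}$, invoke the hypergeometric ODE for $\FGauss(-x,x;1;y)$ from Proposition~\ref{prop4}, and extract the recurrence by equating coefficients in the resulting Riccati-type equation for the logarithmic derivative, then prove \eqref{eq:Q_ineq} by induction on the recurrence. The one difference is that the paper first divides the Riccati equation by $(1-y)$, so the inhomogeneous term becomes $x^2/(1-y)$ and contributes $x^2$ to every coefficient, giving the clean single-convolution recurrence \eqref{eq:g_rec} at once and thereby avoiding your second convolution sum and the extra induction you need to absorb it into the $(n!)^2x^2$ term.
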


\pagebreak[3]
\begin{proof}
By Proposition \ref{prop4} there exist even polynomials $P_n$ with $P_n(0)=0$,
such that 
\[
I(b,2x)=1+\sum_{n=1}^\infty \frac{P_n(x)}{n!^2}\frac{1}{b^{2n}}\,\raisedot
\]
It follows that 
\[
\log I(b,2x)=\sum_{k=1}^\infty\frac{(-1)^{k+1}}{k}
\Bigl(\sum_{n=1}^\infty \frac{P_n(x)}{n!^2}\frac{1}{b^{2n}}\Bigr)^k\raisedot
\]
It is clear that expanding the powers gives a series of the
desired form~\eqref{eq_I(b,2x)}. 

To prove the recurrence for the $Q_n$,
we temporarily consider $x$ as fixed and
define  
$f(y):= I(y^{-1/2},2x)$. Then, by \eqref{eq_I(b,2x)},
\begin{equation}\label{eq:expa_f}
\log f(y)=-\sum_{n=1}^\infty \frac{Q_n}{n!^2}y^n\,.
\end{equation}
By Proposition \ref{prop4} we have $f(y)=\FGauss(x, -x; 1; y)$,
so $f(y)$ satisfies the hypergeometric differential equation
\[
y(1-y)f'' + (1-y)f' + x^2 f = 0,
\]
where primes denote differentiation with respect to $y$.
Define $g(y) := f'(y)/f(y)$.
Then it may be verified
\footnote{Usually a Riccati equation is reduced to a
second-order linear differential equation, see for\\ example
Ince~\cite[\S2.15]{Ince1}. We apply the standard argument in
the reverse direction.
}
that $g(y)$ satisfies the Riccati
equation
\begin{equation}\label{eq:Riccati}
y(g' + g^2) + g + \frac{x^2}{1-y} = 0.
\end{equation}
Let
$g(y) = \sum_{n=0}^\infty g_ny^n$,
where the $g_n$ are polynomials in $x$, e.g.\ $g_0 = -x^2$.
Equating coefficients in~\eqref{eq:Riccati}, we get the recurrence
\begin{equation}\label{eq:g_rec}
g_n = - \Bigl(\frac{1}{n+1}\Bigr)
	\Bigl(x^2 + \sum_{j=0}^{n-1} g_j g_{n-1-j}\Bigr), 
		\;\; {\rm for} \;\; n \ge 0.
\end{equation}
Now, from \eqref{eq:expa_f} and the definitions of $f$ and $g$, we have
\[
g(y) = \frac{f'(y)}{f(y)} = \frac{d}{dy}\log f(y)
	= - \frac{d}{dy} \sum_{n=1}^\infty \frac{Q_n(x)}{n!^2}y^n,
\]
so we see that
\begin{equation}\label{eq:gQ}
g_n = - \frac{Q_{n+1}}{n!(n+1)!}\,\raisedot
\end{equation}
Substituting \eqref{eq:gQ} in (\ref{eq:g_rec}) and simplifying, 
we obtain the recurrence~\eqref{eq:Q_rec}.
\end{proof}

From the recurrence~\eqref{eq:Q_rec} 
it is clear that $Q_n(x)$ is an even polynomial of degree
$2n$, such that $Q_n(0)=0$. Writing 
$Q_n(x)=\sum_{k=1}^ n q_{n,k} x^{2k}$,
we see from the recurrence~\eqref{eq:Q_rec} 
that the coefficients $q_{n,k}$ are nonnegative integers.  

In view of~\eqref{eq:gQ},
the inequality~\eqref{eq:Q_ineq} is equivalent to
$|g_n(x)| \le \max(1,|x|)^{2n+2}$,
which may be proved by induction on $n$, using
the recurrence~\eqref{eq:g_rec}.

Finally, in view of~\eqref{eq:Q_ineq},
the series in~\eqref{eq_I(b,2x)} converge for $b > \max(1,|x|)$.
\hspace*{\fill}\qed

\pagebreak[3]
\begin{corollary} \label{cor:nonzero_I}
If $b > 1$, then $I(b,2x)$ is nonzero in the disk $|x| < b$.
\end{corollary}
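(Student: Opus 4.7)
My plan is to read the corollary as essentially a direct consequence of Theorem~\ref{thm2}. The key observation is that the hypothesis $|x| < b$ combined with $b > 1$ is exactly the condition $b > \max(1,|x|)$ needed to apply Theorem~\ref{thm2}. Under this hypothesis the theorem provides the series representation
\[
\log I(b,2x) = -\sum_{n=1}^\infty \frac{Q_n(x)}{n!^2\,b^{2n}},
\]
together with the bound $|Q_n(x)| \le n!(n-1)!\max(1,|x|)^{2n}$.

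The first step is to verify absolute convergence of this series in the regime $|x|<b$, $b>1$. Using the bound from Theorem~\ref{thm2}, the $n$th term is dominated by
\[
\frac{(n-1)!}{n!}\left(\frac{\max(1,|x|)}{b}\right)^{\!2n}
= \frac{1}{n}\left(\frac{\max(1,|x|)}{b}\right)^{\!2n},
\]
and since $\max(1,|x|)/b < 1$, this is a majorant of a convergent geometric-like series. Hence $\log I(b,2x)$ is a finite complex number.

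The second and final step is to conclude that $I(b,2x) = \exp(\log I(b,2x))$ is therefore nonzero, since the exponential function never vanishes. To make the logic airtight I would recall that the identity in~\eqref{eq_I(b,2x)} was derived in the proof of Theorem~\ref{thm2} by expanding $\log(1 + \sum_{n\ge 1}P_n(x)b^{-2n}/n!^2)$ as a power series in $b^{-2}$; thus on the region of convergence the series genuinely represents a branch of $\log I(b,2x)$, so exponentiating recovers $I(b,2x)$ itself, and this value must be nonzero.

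There is no real obstacle here: everything has been set up by Theorem~\ref{thm2}, and the corollary amounts to observing that ``$\log I$ is finite'' implies ``$I$ is nonzero.'' The only point requiring a line of care is the alignment of hypotheses: $b > 1$ and $|x| < b$ together give $\max(1,|x|) < b$, which is precisely what Theorem~\ref{thm2} needs.
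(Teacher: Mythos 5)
Your proof is correct and is essentially the paper's own argument: the paper's proof reads, in full, ``This follows from the convergence of the series for $\log I(b,2x)$,'' and you have simply filled in the convergence estimate via the bound $|Q_n(x)|\le n!\,(n-1)!\max(1,|x|)^{2n}$ and the observation that $b>1$, $|x|<b$ is exactly the hypothesis $b>\max(1,|x|)$ of Theorem~\ref{thm2}. No further comment is needed.
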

\begin{proof}
This follows from the convergence of the series for $\log I(b,2x)$.
\hspace*{\fill}\qed
\end{proof}

\begin{proposition} \label{prop:qnk}
The numbers $q_{n,k}$ are determined by 
$q_{n,1}=(n-1)!^2$ for $n\ge 1$,
and 
\begin{equation}\label{eq:q_rec}
 q_{n+1,k}=\sum_{j=0}^{n-1}
\binom{n}{j}\binom{n}{j+1}\sum_{r=\mu}^\nu q_{j+1,r}q_{n-j,k-r}
\end{equation}
for $2\le k\le n+1$,
where $\mu=\max(1,k-n+j)$ and $\nu=\min(j+1,k-1)$.
Also, $q_{n,k}$ is a positive integer for each $n\ge1$ and $1\le k\le n$. 
\end{proposition}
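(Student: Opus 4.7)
The plan is to extract the recurrence for the coefficients $q_{n,k}$ directly from the polynomial recurrence~\eqref{eq:Q_rec} of Theorem~\ref{thm2}, and then handle the claims about $q_{n,1}$ and the positivity of $q_{n,k}$ separately.

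First, I would observe that since $Q_m(0)=0$ for every $m\ge 1$, the polynomial $Q_m(x)=\sum_{r=1}^{m}q_{m,r}x^{2r}$ has lowest-order term $q_{m,1}x^2$. Hence the product $Q_{j+1}(x)Q_{n-j}(x)$ in~\eqref{eq:Q_rec} is divisible by $x^4$, so matching the coefficient of $x^2$ on both sides of~\eqref{eq:Q_rec} yields $q_{n+1,1}=(n!)^2$. Combined with the base case $q_{1,1}=1=0!^2$ obtained from $Q_1(x)=x^2$, an induction gives $q_{n,1}=(n-1)!^2$ for all $n\ge 1$.

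Next, for $2\le k\le n+1$, the term $(n!)^2x^2$ in~\eqref{eq:Q_rec} contributes nothing to the coefficient of $x^{2k}$, so I extract the $x^{2k}$-coefficient of each product
\[
Q_{j+1}(x)Q_{n-j}(x)=\sum_{r=1}^{j+1}\sum_{s=1}^{n-j}q_{j+1,r}q_{n-j,s}x^{2(r+s)}.
\]
Setting $r+s=k$ and requiring $1\le r\le j+1$ together with $1\le k-r\le n-j$ gives the range $\mu=\max(1,k-n+j)\le r\le \min(j+1,k-1)=\nu$, which is exactly the recurrence~\eqref{eq:q_rec}. Since all the multiplicative factors $\binom{n}{j}\binom{n}{j+1}$ are nonnegative integers and the base values are integers, induction on $n$ then shows immediately that each $q_{n,k}$ is a nonnegative integer.

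What remains, and is the main obstacle, is to show strict positivity of $q_{n,k}$ for every $1\le k\le n$; the recurrence guarantees only nonnegativity, so I must exhibit at least one positive contribution on the right-hand side. I would argue by induction on $n$. The case $k=1$ is already covered by $q_{n,1}=(n-1)!^2>0$. For $2\le k\le n$ (with $n$ replaced by $n+1$, so $k\le n$), I would take $j=0$ in~\eqref{eq:q_rec}: then $\binom{n}{0}\binom{n}{1}=n$, and the condition $k\le n$ forces $\mu=1$ and $\nu=1$, so the sum contains the single positive term $q_{1,1}q_{n,k-1}=q_{n,k-1}$, which is positive by the inductive hypothesis. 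For the remaining diagonal case $k=n+1$, I would take $j=n-1$: then $\binom{n}{n-1}\binom{n}{n}=n$, and the inner sum reduces to $q_{n,n}q_{1,1}=q_{n,n}$, again positive by induction. This completes the induction and the proof.
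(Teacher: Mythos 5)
Your proposal is correct and follows the same route as the paper: the recurrence and the value of $q_{n,1}$ are obtained by equating coefficients of $x^{2k}$ (respectively $x^2$) in~\eqref{eq:Q_rec}, and positivity follows by induction. The paper states the positivity step without detail; your explicit argument (isolating the $j=0$ term for $2\le k\le n$ and the $j=n-1$ term for $k=n+1$) correctly fills in what the paper leaves implicit.
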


\begin{proof}
The recurrence is obtained by equating coefficients of $x^{2k}$ in
\eqref{eq:Q_rec}. Positivity of the $q_{n,k}$ for $1 \le k \le n$ follows.
\hspace*{\fill}\qed
\end{proof}
\begin{remark}
We may consider the sum over $r$ in~\eqref{eq:q_rec} to be over all
$r \in \Z$ if we define $q_{n,k}=0$ for $k<1$ and $k>n$. The given
values $\mu$ and $\nu$ correspond to the nonzero terms of the resulting
sum.
\end{remark}

\begin{corollary}\label{cor sum}
We have $\sum_{k=1}^n q_{n,k} = n!\, (n-1)!$.
\end{corollary}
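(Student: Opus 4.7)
The plan is to recognize $\sum_{k=1}^n q_{n,k}$ as the value $Q_n(1)$ and to evaluate this by specializing the generating identity of Theorem~\ref{thm2} at $x=1$.

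First I would observe that, directly from the definition $Q_n(x)=\sum_{k=1}^n q_{n,k}x^{2k}$, we have $\sum_{k=1}^n q_{n,k}=Q_n(1)$, so it suffices to prove $Q_n(1)=n!(n-1)!$ for every $n\ge 1$.

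Next I would evaluate $I(b,2)$ in closed form using Proposition~\ref{prop4}. Setting $x=1$ makes the hypergeometric series $\FGauss(-1,1;1;b^{-2})$ terminate after two terms: only the $n=0$ and $n=1$ summands of
\[
I(b,2)=1+\sum_{n=1}^\infty \frac{1}{b^{2n}n!^2}\prod_{j=0}^{n-1}(j^2-1)
\]
are nonzero, since the factor $j^2-1$ vanishes at $j=1$. This gives $I(b,2)=1-b^{-2}$.

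Then I would take logarithms. For $b>1$ the standard series yields
\[
\log I(b,2)=\log(1-b^{-2})=-\sum_{n=1}^\infty \frac{1}{n\,b^{2n}}.
\]
On the other hand, Theorem~\ref{thm2} applied at $x=1$ (the hypothesis $b>\max(1,|x|)$ reduces to $b>1$) gives
\[
\log I(b,2)=-\sum_{n=1}^\infty \frac{Q_n(1)}{n!^2}\,\frac{1}{b^{2n}}\,.
\]
Equating coefficients of $b^{-2n}$ in these two absolutely convergent power series in $b^{-2}$ yields $Q_n(1)/n!^2 = 1/n$, i.e.\ $Q_n(1)=n!(n-1)!$, which is the desired identity. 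There is no serious obstacle: the only thing to check is that the series manipulation is legitimate, and this is immediate because both series are power series in $b^{-2}$ converging on a common nonempty disk $|b^{-2}|<1$.
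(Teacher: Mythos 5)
Your proof is correct, but it follows a different route from the paper's. The paper's proof substitutes $x=1$ directly into the recurrence \eqref{eq:Q_rec} and proceeds by induction: assuming $Q_m(1)=m!\,(m-1)!$ for $m\le n$, each summand $\binom{n}{j}\binom{n}{j+1}Q_{j+1}(1)Q_{n-j}(1)$ collapses to $(n!)^2$, so $Q_{n+1}(1)=(n!)^2(1+n)=(n+1)!\,n!$. You instead exploit the generating identity: since $\FGauss(-1,1;1;b^{-2})$ terminates, $I(b,2)=1-b^{-2}$ exactly, and comparing the expansion of $\log(1-b^{-2})$ with the series of Theorem~\ref{thm2} at $x=1$ gives $Q_n(1)/n!^2=1/n$ by uniqueness of power series coefficients. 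Both arguments are complete; yours has the merit of explaining \emph{why} the answer is $n!\,(n-1)!$ (these are just the coefficients $1/n$ of $-\log(1-u)$ rescaled by $n!^2$) and of avoiding the binomial bookkeeping, while the paper's is more self-contained in that it uses only the recurrence and no analytic input. One small remark: the convergence of the Theorem~\ref{thm2} series at $x=1$, which you invoke to justify equating coefficients, rests on the bound \eqref{eq:Q_ineq}, $|Q_n(1)|\le n!\,(n-1)!$; this is proved independently in the paper (via the $g_n$ induction), so there is no circularity, but it is worth noting that your argument quietly relies on it.
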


\begin{proof}
This is easily obtained if we substitute 
$x=1$ in the recurrence \eqref{eq:Q_rec}. 
\hspace*{\fill}\qed
\end{proof}

\begin{corollary}\label{cor:BesselConn}
We have
\begin{equation}\label{eq:BesselConn}
q_{n,n}=2^{2n}n!(n-1)!\sum_{k=1}^\infty\frac{1}{j_{0,k}^{2n}}
\end{equation}
where $(j_{0,k})$ is the sequence of positive 
zeros of the Bessel function $J_0(z)$.
\end{corollary}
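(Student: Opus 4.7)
The plan is to compute $\lim_{b\to\infty}\log I(b,2bu)$ in two different ways and then read off $q_{n,n}$ by comparison with the Hadamard factorization of $J_0$.

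First, from Theorem~\ref{thm2}, for $b>|x|$ we have
\[
\log I(b,2bu) \;=\; -\sum_{n=1}^\infty \frac{Q_n(bu)}{n!^2 \, b^{2n}}.
\]
Since $Q_n(x)$ is an even polynomial of degree $2n$ with leading coefficient $q_{n,n}$, we have $Q_n(bu)/b^{2n}\to q_{n,n}u^{2n}$ as $b\to\infty$ for fixed $u$. Assuming the termwise passage to the limit is legitimate,
\[
\lim_{b\to\infty}\log I(b,2bu) \;=\; -\sum_{n=1}^\infty \frac{q_{n,n}}{n!^2}\, u^{2n}.
\]
Separately, using the integral representation of Proposition~\ref{Firstformulas},
\[
I(b,2bu) \;=\; \frac{2}{\pi}\int_0^1 \cos\!\Bigl(2bu\,\arcsin\frac{t}{b}\Bigr)\frac{dt}{\sqrt{1-t^2}},
\]
and noting that $b\arcsin(t/b)\to t$ uniformly for $t\in[0,1]$, dominated convergence in the integral yields
\[
\lim_{b\to\infty} I(b,2bu) \;=\; \frac{2}{\pi}\int_0^1 \frac{\cos(2ut)\,dt}{\sqrt{1-t^2}} \;=\; J_0(2u),
\]
by Mehler's classical integral for the Bessel function. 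For $|u|<1$, Corollary~\ref{cor:nonzero_I} guarantees $I(b,2bu)\ne 0$, and $|2u|<2<j_{0,1}$ guarantees $J_0(2u)\ne 0$, so on this disk $\log I(b,2bu)\to\log J_0(2u)$.

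Combining the two evaluations,
\[
\log J_0(2u) \;=\; -\sum_{n=1}^\infty \frac{q_{n,n}}{n!^2}\, u^{2n}.
\]
Now I invoke the classical Hadamard product $J_0(z)=\prod_{k=1}^\infty\bigl(1-z^2/j_{0,k}^2\bigr)$. Taking logs and expanding geometrically (valid for $|u|<j_{0,1}/2$), then interchanging the sums by absolute convergence (since $j_{0,k}\sim\pi k$, so $\sum_k j_{0,k}^{-2m}<\infty$ for every $m\ge 1$),
\[
\log J_0(2u) \;=\; -\sum_{k=1}^\infty\sum_{m=1}^\infty \frac{(4u^2/j_{0,k}^2)^m}{m} \;=\; -\sum_{m=1}^\infty \frac{4^m u^{2m}}{m}\sum_{k=1}^\infty \frac{1}{j_{0,k}^{2m}}.
\]
Equating coefficients of $u^{2n}$ gives $q_{n,n}/n!^2 = (4^n/n)\sum_{k\ge 1} j_{0,k}^{-2n}$, i.e.\ the asserted identity~\eqref{eq:BesselConn}.

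The main obstacle is justifying the termwise limit in the first step. I would handle it by a Weierstrass M-test argument: the estimate~\eqref{eq:Q_ineq} gives $|Q_n(bu)|\le n!(n-1)!\max(1,b|u|)^{2n}$, hence for $|u|\le u_0<1$ and $b\ge 1/u_0$ one has $|Q_n(bu)|/(n!^2 b^{2n})\le u_0^{2n}/n$, which is summable (with sum $-\log(1-u_0^2)$) and independent of $b$. Dominated convergence for the counting measure on $n$ then validates the termwise passage to the limit and, together with the pointwise convergence $\log I(b,2bu)\to\log J_0(2u)$, identifies the two expressions. Everything else is standard bookkeeping with Bessel function identities.
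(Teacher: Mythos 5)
Your proof is correct, but it takes a genuinely different route from the paper's. The paper's proof is purely combinatorial: it sets $q_n:=q_{n,n}$, specialises the recurrence \eqref{eq:q_rec} to $k=n+1$ (where only the extreme terms of the inner sum survive) to obtain the closed recurrence $q_{n+1}=\sum_{j=0}^{n-1}\binom{n}{j}\binom{n}{j+1}q_{j+1}q_{n-j}$, recognises this as equation (4) of Carlitz, and quotes Carlitz's result that its solution is given by \eqref{eq:BesselConn}. You instead give a self-contained analytic derivation: the scaling limit $\log I(b,2bu)\to\log J_0(2u)$ as $b\to\infty$, justified on one side by the uniform bound \eqref{eq:Q_ineq} (your domination $|Q_n(bu)|/(n!^2b^{2n})\le u_0^{2n}/n$ for $b\ge 1/u_0$ is exactly what is needed) and on the other by the integral representation of Proposition~\ref{Firstformulas} together with Lemma~\ref{lemma:arcsin_bd2}, yields the generating-function identity $\log J_0(2u)=-\sum_n q_{n,n}u^{2n}/n!^2$; the Hadamard product of $J_0$ then delivers the coefficients. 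In effect you rederive, from the paper's own function $I(b,x)$, the generating function $-\log J_0(2\sqrt{x})$ that the paper only mentions in the remark following the corollary, whereas the paper outsources that step to Carlitz. Your route is longer but more illuminating: it explains the Bessel connection rather than merely verifying a recurrence, and it dovetails with the paper's later observation (\S\ref{sec:asympt-I}) that $I(b,x)\approx J_0(x/b)$. The only external input you need is the classical product $J_0(z)=\prod_k(1-z^2/j_{0,k}^2)$, which is in Watson; all interchanges of limits you flag are correctly justified.
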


\begin{proof}
Define $q_n:=q_{n,n}$.
With $k=n+1$, the recurrence \eqref{eq:q_rec} gives, for $n \ge 1$,
\[
q_{n+1}=\sum_{j=0}^{n-1}\binom{n}{j}\binom{n}{j+1}q_{j+1}q_{n-j}=
\sum_{j=1}^{n}\binom{n}{j}\binom{n}{j-1}q_{j}q_{n-j+1}.
\]
This recurrence appears in Carlitz~\cite[eqn.~(4)]{C}, where it is shown that
the solution satisfies~\eqref{eq:BesselConn}.
\hspace*{\fill}\qed
\end{proof}

\begin{remark}
The sequence $(q_{n})$ is A002190 in Sloane's 
on-line encyclopedia of\linebreak 
integer sequences (OEIS),
where the generating function $-\log(J_0(2\sqrt{x}))$
is given.
The numbers $q_{n}$ enjoy remarkable
congruence properties. In fact, \eqref{eq:BesselConn} is analogous to
Euler's identity 
$|B_{2n}| = 2(2n)!\sum_{k=1}^\infty (2\pi k)^{-2n}$,
and the numbers $q_{n}$ are analogous to Bernoulli numbers.
We refer to Carlitz~\cite{C} for further discussion.
\end{remark}
\begin{remark}
There are other recurrences giving the polynomials $Q_n$ and the numbers 
$q_{n,k}$. We omit discussion of them here due to space limitations.
\end{remark}

\begin{table}[t]
\begin{center}
\caption{The coefficients $q_{n,k}$.}
\begin{tabular}{||l||r|r|r|r|r|r|r||}
\hline
\hline
$n\smallsetminus k$ & 1 & 2 & 3 & 4 & 5 & 6 & 7\\
\hline
\hline
1 & 1 & &&&&&\\
2 & 1 & 1 & &&&&\\
3 & 4 & 4 & 4 & &&&\\
4 & 36 & 33 & 42 & 33 & & &\\
5 & 576 & 480 & 648 & 720 & 456 &&\\
6 & 14400 & 10960 & 14900 & 18780 & 17900 & 9460 &\\
7 & 518400 & 362880 &  487200 & 648240 & 730800 & 606480 & 274800\\
\hline
\end{tabular}
\end{center}
\label{default}
\end{table}

\section{Bounds and asymptotic expansions} \label{sec:asympt-I}

Since $I(b,x)$ is an even function of $x$, there is no loss of generality
in assuming that $x \ge 0$ when giving bounds or asymptotic results
for $I(b,x)$. This simplifies the statement of the results. 
Similarly remarks apply to $\psi_\sigma(x)$, which is also an even function.

Consider the first representation of $I(b,x)$ in
Proposition~\ref{Firstformulas}.
If $b$ is large, then
\[
\arctan\left(\frac{\sin \theta}{b - \cos\theta}\right) = \frac{\sin \theta}{b} +
\Orden\left(b^{-2}\right)\,.
\]
However, it is well-known~\cite[\S2.2]{Watson} 
that the Bessel function $J_0(x)$ has an integral 
representation 
\begin{equation} \label{J_0_integral}
J_0(x) = \frac{1}{\pi}\int_0^\pi \cos(x\sin\theta)\,d\theta\,.
\end{equation}
Thus, we expect $I(b,x)$ to be approximated in some sense by $J_0(x/b)$.
A more detailed analysis
confirms this (see Proposition~\ref{prop:Ibx_J0_bd} and 
Corollary~\ref{cor:Bessel_approx}).
The connection with Bessel functions makes Corollary~\ref{cor:BesselConn}
less surprising than it first appears.

\begin{proposition} \label{prop:easy_I_bd}
For all $b > 1$ and $x \in \R$, we have $|I(b,x)| \le 1$.
\end{proposition}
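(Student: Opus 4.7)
The plan is to work directly from the defining integral~\eqref{defI}:
\[
I(b,x) = \frac{1}{2\pi}\int_0^{2\pi}\exp\bigl(-ix\arg(1-b^{-1}e^{i\theta})\bigr)\,d\theta.
\]
The argument of a complex number is real, so for real $x$ the exponent $-ix\arg(1-b^{-1}e^{i\theta})$ is purely imaginary, whence the integrand has modulus exactly $1$. Applying the triangle inequality for integrals then yields
\[
|I(b,x)| \le \frac{1}{2\pi}\int_0^{2\pi}\bigl|\exp\bigl(-ix\arg(1-b^{-1}e^{i\theta})\bigr)\bigr|\,d\theta = \frac{1}{2\pi}\int_0^{2\pi}1\,d\theta = 1.
\]

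The only point to verify carefully is that $\arg(1-b^{-1}e^{i\theta})$ is unambiguously real. Since $b>1$, we have $|b^{-1}e^{i\theta}|=b^{-1}<1$, hence $\Re(1-b^{-1}e^{i\theta}) \ge 1 - b^{-1} > 0$ for every $\theta\in[0,2\pi]$. So $1-b^{-1}e^{i\theta}$ lies in the open right half-plane, and the principal value of $\arg$ is continuous and real-valued there (indeed it takes values in $(-\pi/2,\pi/2)$, consistent with formula~\eqref{trig}). This is the same branch implicitly used throughout \S\ref{sec:I}.

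There is essentially no obstacle; the bound is immediate from the fact that $I(b,x)$ is the expectation of a unimodular random variable when $x$ is real. The only care needed is the branch issue just noted, which is handled by the hypothesis $b>1$.
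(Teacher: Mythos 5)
Your proof is correct and is essentially the paper's argument: the paper simply applies the triangle inequality to the last integral of Proposition~\ref{Firstformulas}, where the integrand is bounded by $1/\sqrt{1-t^2}$ and $\frac{2}{\pi}\int_0^1\frac{dt}{\sqrt{1-t^2}}=1$, while you apply it directly to the defining integral~\eqref{defI}, whose integrand is unimodular. If anything your route is marginally more self-contained, since it bypasses Proposition~\ref{Firstformulas} entirely, and your remark about the branch of $\arg$ being well defined because $1-b^{-1}e^{i\theta}$ stays in the right half-plane is a correct (if routine) point of care.
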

\begin{proof}
This follows from the final integral in Proposition~\ref{Firstformulas}.
\hspace*{\fill}\qed
\end{proof}

\begin{lemma} \label{lemma:arcsin_bd}
For $t \in [0,1]$ and $c_1 = \pi/2 - 1 < 0.5708$, we have 
\[0 \le \arcsin(t) - t \le c_1 t^3.\]
\end{lemma}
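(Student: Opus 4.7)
\smallskip
\noindent
The plan is to use the well-known Maclaurin series
\[
\arcsin(t) = t + \sum_{n=1}^{\infty} \frac{(2n)!}{4^n (n!)^2 (2n+1)}\, t^{2n+1},
\]
which converges for $|t|\le 1$ and has all coefficients strictly positive. Rearranging gives
\[
\arcsin(t) - t = \sum_{n=1}^{\infty} a_n\, t^{2n+1}, \qquad a_n := \frac{(2n)!}{4^n (n!)^2 (2n+1)} > 0.
\]
The lower bound $0 \le \arcsin(t) - t$ for $t \in [0,1]$ is then immediate, since every term in the series is nonnegative on that interval.

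For the upper bound, I would introduce the auxiliary function
\[
\varphi(t) := \frac{\arcsin(t) - t}{t^3} = \sum_{n=1}^{\infty} a_n\, t^{2n-2} \qquad (0 < t \le 1),
\]
extended by continuity to $\varphi(0) = a_1 = 1/6$. Since this is a power series in $t^2$ with nonnegative coefficients, $\varphi$ is nondecreasing on $[0,1]$. Hence $\varphi(t) \le \varphi(1)$ for all $t \in [0,1]$, and evaluating at the endpoint gives $\varphi(1) = \arcsin(1) - 1 = \pi/2 - 1 = c_1$. This yields $\arcsin(t) - t \le c_1 t^3$ for $t \in [0,1]$, with equality at $t=1$.

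There is no real obstacle here: the result is sharp precisely because $c_1$ is chosen so that equality holds at the endpoint $t=1$, and the monotonicity of $\varphi$ on $[0,1]$ follows trivially from the nonnegativity of the Taylor coefficients of $\arcsin$. (Alternatively, one could prove monotonicity analytically by verifying that $(\arcsin t - t)' = 1/\sqrt{1-t^2} - 1$ dominates $3 c_1 t^2$ near $t=0$ and checking the sign of a suitable derivative, but the power-series approach is much cleaner.)
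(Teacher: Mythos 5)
Your proof is correct and follows essentially the same route as the paper's: the paper also sets $f(t)=(\arcsin(t)-t)/t^3$, observes from the Taylor series that it is nonnegative and increasing on $[0,1]$, and concludes that its supremum is $f(1)=\pi/2-1$. You have simply spelled out the details (explicit coefficients, convergence at $t=1$) that the paper leaves implicit.
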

\begin{proof}
Let $f(t) = (\arcsin(t) - t)/t^3$.
We see from the Taylor series that $f(t)$
is nonnegative and increasing in $[0,1]$.
Thus $\sup_{t \in [0,1]} f(t) = f(1) = \pi/2-1$.
\hspace*{\fill}\qed
\end{proof}
\begin{lemma} \label{lemma:arcsin_bd2}
Suppose $b > 1$, $t \in [0,1]$, and $c_1$ as in Lemma~\ref{lemma:arcsin_bd}.
Then
\[0 \le b\arcsin(t/b) - t \le c_1 t^3/b^2.\]
\end{lemma}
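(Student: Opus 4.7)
The plan is to deduce this lemma directly from Lemma \ref{lemma:arcsin_bd} by a simple rescaling argument. Since $b > 1$ and $t \in [0,1]$, the quantity $u := t/b$ satisfies $u \in [0, 1/b] \subseteq [0,1]$, which is precisely the range to which the preceding lemma applies.

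First I would apply Lemma \ref{lemma:arcsin_bd} with $u = t/b$ in place of $t$, obtaining
\[
0 \le \arcsin(t/b) - t/b \le c_1\,(t/b)^3 = c_1 t^3/b^3.
\]
Then, multiplying through by $b > 0$ (which preserves both inequalities) gives exactly
\[
0 \le b\arcsin(t/b) - t \le c_1 t^3/b^2,
\]
as required.

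There is no real obstacle; the statement is essentially a rescaled restatement of Lemma \ref{lemma:arcsin_bd}, packaged for convenient application later. Judging from the surrounding discussion about approximating $I(b,x)$ by $J_0(x/b)$ and from the second representation of $I(b,x)$ in Proposition \ref{Firstformulas} (whose integration variable after the change of variables effectively measures $b\arcsin(t/b)$), this bound will serve to control the discrepancy between the integrand appearing in $I(b,x)$ and the integrand $\cos(x\sin\theta)$ in the Bessel integral \eqref{J_0_integral}, yielding an error of order $\Orden(b^{-2})$ for bounded $x$.
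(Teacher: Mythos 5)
Your proof is correct and is exactly the paper's argument: substitute $t/b$ into Lemma~\ref{lemma:arcsin_bd} and multiply the resulting inequality by $b$. Nothing to add.
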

\begin{proof}
Replace $t$ by $t/b$ in Lemma~\ref{lemma:arcsin_bd}, and multiply both sides
of the resulting\linebreak 
inequality by $b$.
\hspace*{\fill}\qed 
\end{proof}
\begin{proposition} \label{prop:Ibx_J0_bd}
Suppose $b > 1$, $x > 0$, and $c_2 = (2 - 4/\pi)/3 < 0.2423$. Then
\[|I(b,x) - J_0(x/b)| \le c_2 x/b^3.\]
\end{proposition}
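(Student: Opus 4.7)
The plan is to compare the third integral representation of $I(b,x)$ from Proposition~\ref{Firstformulas} with a matching integral representation of $J_0(x/b)$. Starting from \eqref{J_0_integral}, substitute $\theta \mapsto \arcsin t$ (using symmetry about $\pi/2$ to fold the integral onto $[0,\pi/2]$) to obtain
\[
J_0(x/b) = \frac{2}{\pi}\int_0^1 \cos(xt/b)\,\frac{dt}{\sqrt{1-t^2}}\,,
\]
which matches the third formula in Proposition~\ref{Firstformulas} except that $\arcsin(t/b)$ is replaced by $t/b$. Subtracting,
\[
I(b,x) - J_0(x/b) = \frac{2}{\pi}\int_0^1 \Bigl[\cos(x\arcsin(t/b)) - \cos(xt/b)\Bigr]\,\frac{dt}{\sqrt{1-t^2}}\,.
\]

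Next I would bound the integrand pointwise. Since the cosine function is $1$-Lipschitz, the bracketed expression has modulus at most $x|\arcsin(t/b) - t/b|$, and by Lemma~\ref{lemma:arcsin_bd2} (applied after dividing through by $b$) this is at most $c_1 x t^3/b^3$. Therefore
\[
|I(b,x) - J_0(x/b)| \le \frac{2 c_1 x}{\pi b^3} \int_0^1 \frac{t^3\,dt}{\sqrt{1-t^2}}\,.
\]

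Finally I would evaluate the remaining integral by the substitution $u = 1 - t^2$, which gives $\int_0^1 t^3\,(1-t^2)^{-1/2}\,dt = 2/3$. Hence
\[
|I(b,x) - J_0(x/b)| \le \frac{4 c_1}{3\pi}\cdot\frac{x}{b^3} = \frac{2 - 4/\pi}{3}\cdot\frac{x}{b^3} = \frac{c_2 x}{b^3}\,,
\]
using $c_1 = \pi/2 - 1$ in the last step. No step here is a serious obstacle; the only care needed is in the folding of the $J_0$ integral and in verifying that the arithmetic reduces exactly to the claimed constant $c_2$.
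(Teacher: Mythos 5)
Your proposal is correct and follows essentially the same route as the paper: subtract the two matching integral representations over $[0,1]$ with weight $(1-t^2)^{-1/2}$, bound the cosine difference by the Lipschitz property, invoke the $\arcsin$ estimate of Lemma~\ref{lemma:arcsin_bd2}, and evaluate $\int_0^1 t^3(1-t^2)^{-1/2}\,dt = 2/3$. The only cosmetic difference is that the paper works with $I(b,bx)-J_0(x)$ and rescales $x\mapsto x/b$ at the end, while you work directly with $I(b,x)-J_0(x/b)$; the arithmetic yielding $c_2=(2-4/\pi)/3$ checks out.
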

\begin{proof}
From the last integral of Proposition~\ref{Firstformulas}, we have
\[
I(b,bx) = \frac{2}{\pi}\int_0^1 \cos\left(bx\arcsin\frac{t}{b}\right)
\frac{dt}{\sqrt{1-t^2}}\,\raisedot
\]
Also, from the integral representation~\eqref{J_0_integral} for $J_0$,
we see that
\[J_0(x) = \frac{2}{\pi}\int_0^1 \cos(xt) \frac{dt}{\sqrt{1-t^2}}\,\raisedot\]
Thus, by subtraction,
\begin{equation} \label{eq:IJdiff}
I(b,bx) - J_0(x) = \frac{2}{\pi}\int_0^1 f(b,x,t)
\frac{dt}{\sqrt{1-t^2}}\,\raisecomma
\end{equation}
where $f(b,x,t) = \cos(bx\arcsin({t}/{b})) - \cos(xt)$.
Using $|\cos x- \cos y)| \le |x-y|$, we have
\[|f(b,x,t)| \le |bx\arcsin(t/b) - xt|.\]
Thus, from Lemma~\ref{lemma:arcsin_bd2},
\[|f(b,x,t)| \le c_1 t^3x/b^2.\]
Taking norms in~\eqref{eq:IJdiff} gives
\begin{equation} \label{eq:sin3_int}
|I(b,bx) - J_0(x)| \le \frac{2c_1 x}{\pi b^2}
\int_0^1 \frac{t^3 dt}{\sqrt{1-t^2}}\,\raisedot
\end{equation}
The integral in~\eqref{eq:sin3_int} is easily seen to have the value $2/3$.
Thus, replacing $x$ by $x/b$ in~\eqref{eq:sin3_int} completes the proof.
\hspace*{\fill}\qed
\end{proof}
\begin{corollary} \label{cor:two_term_bound}
If $b > 1$, $x > 0$, $c_2$ as in Proposition~\ref{prop:Ibx_J0_bd}, and 
$c_3 = \sqrt{2/\pi} < 0.7979$, then
\begin{equation} \label{eq:two_term_bound}
|I(b,x)| \le c_2 x/b^3 + c_3 (b/x)^{1/2}.
\end{equation}
\end{corollary}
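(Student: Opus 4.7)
The plan is to combine Proposition~\ref{prop:Ibx_J0_bd} with a classical uniform bound on the Bessel function $J_0$ via the triangle inequality. Specifically, writing
\[
|I(b,x)| \le |I(b,x) - J_0(x/b)| + |J_0(x/b)|,
\]
Proposition~\ref{prop:Ibx_J0_bd} immediately bounds the first term by $c_2 x/b^3$, so the entire task reduces to showing that $|J_0(x/b)| \le c_3(b/x)^{1/2}$. Setting $y := x/b > 0$, this is the inequality
\[
|J_0(y)| \le \sqrt{\tfrac{2}{\pi y}}\,,
\]
which is a classical bound due essentially to Landau; it can be read off from the standard asymptotic $J_0(y) \sim \sqrt{2/(\pi y)}\cos(y-\pi/4)$, refined to hold uniformly for all $y>0$, and is documented in Watson~\cite{Watson}.

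The main obstacle is the justification of this Bessel bound, since unlike the large-$y$ asymptotic it must be uniform down to $y \to 0^+$ (where it becomes trivial since the right-hand side blows up while $J_0$ stays bounded). I would simply cite the relevant section of Watson, or alternatively give a short self-contained verification by using the integral representation \eqref{J_0_integral} together with the stationary-phase/van der Corput argument that gives the sharp constant $\sqrt{2/\pi}$, checking numerically that the bound is not violated at small $y$ (e.g.\ $y \in (0,1]$, where $|J_0(y)| \le 1 \le \sqrt{2/(\pi y)}$ is automatic for $y \le 2/\pi$, and for $y \in [2/\pi,1]$ one verifies by monotonicity of $J_0$ on $[0,j_{0,1}]$).

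Once the Bessel bound is in hand, the corollary follows by direct substitution: we obtain
\[
|I(b,x)| \le \frac{c_2 x}{b^3} + \sqrt{\frac{2}{\pi}}\left(\frac{b}{x}\right)^{1/2} = \frac{c_2 x}{b^3} + c_3\left(\frac{b}{x}\right)^{1/2},
\]
which is \eqref{eq:two_term_bound}. I would also remark in passing that this two-term bound is the natural ``crossover'' estimate: the first term dominates when $x$ is small compared to $b^{7/3}$, while the second term dominates for larger $x$, reflecting the heuristic that $I(b,x)$ behaves like $J_0(x/b)$ until the $\Orden(1/b^2)$ error in the $\arcsin$ approximation catches up.
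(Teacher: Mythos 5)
Your proposal is correct and follows essentially the same route as the paper: the triangle inequality combined with Proposition~\ref{prop:Ibx_J0_bd} and the classical uniform bound $|J_0(y)|\le\sqrt{2/(\pi y)}$ for $y>0$ (which the paper simply cites from Abramowitz--Stegun rather than re-deriving). The extra discussion you give on justifying the Bessel bound for small $y$ is harmless but unnecessary, since the bound is standard and can be quoted.
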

\begin{proof}
It is known~\cite[9.2.28--9.2.31]{AS} that 
$|J_0(x)| \le \sqrt{2/(\pi x)}$ for real, positive $x$.
Thus, the result follows from Proposition~\ref{prop:Ibx_J0_bd}.
\hspace*{\fill}\qed
\end{proof}
\begin{remark}
The crossover point in Corollary~\ref{cor:two_term_bound} is
for $b \approx x^{3/7}$:
the first term in~\eqref{eq:two_term_bound} dominates if $b \ll x^{3/7}$;
the second term dominates if $b \gg x^{3/7}$.
\end{remark}
\begin{corollary} \label{cor:uniform_Ibound}
If $x > 1$ and $b \ge x^{1/2}$, then
\[|I(b,x)| \le c_3(b/x)^{1/2}(1 + c_5b^{-1/2}),\]
where $c_2$, $c_3$ are as above, and
$c_5 = c_2/c_3 < 0.3037$.
\end{corollary}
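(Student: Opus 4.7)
The plan is to derive this corollary directly from Corollary~\ref{cor:two_term_bound} by a short algebraic manipulation that exploits the hypothesis $b \ge x^{1/2}$. No new integral estimate is needed; the work is entirely bookkeeping.

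First I would start from the bound
\[
|I(b,x)| \le \frac{c_2 x}{b^3} + c_3\Bigl(\frac{b}{x}\Bigr)^{1/2}
\]
supplied by Corollary~\ref{cor:two_term_bound}, and factor out the second (larger) term $c_3(b/x)^{1/2}$. A short calculation shows
\[
\frac{c_2 x/b^3}{c_3(b/x)^{1/2}} = \frac{c_2}{c_3}\cdot\frac{x^{3/2}}{b^{7/2}},
\]
so that
\[
|I(b,x)| \le c_3\Bigl(\frac{b}{x}\Bigr)^{1/2}\Bigl(1 + \frac{c_2}{c_3}\cdot\frac{x^{3/2}}{b^{7/2}}\Bigr).
\]

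Next I would invoke the hypothesis. The condition $b \ge x^{1/2}$ is equivalent to $b^3 \ge x^{3/2}$, and dividing by $b^{7/2}$ gives $x^{3/2}/b^{7/2} \le b^{-1/2}$. Substituting this into the parenthesized factor above yields
\[
|I(b,x)| \le c_3\Bigl(\frac{b}{x}\Bigr)^{1/2}\bigl(1 + c_5 b^{-1/2}\bigr)
\]
with $c_5 := c_2/c_3$, which is precisely the claim. Finally, the numerical estimate $c_5 < 0.3037$ follows from the explicit values $c_2 = (2-4/\pi)/3$ and $c_3 = \sqrt{2/\pi}$, so $c_5 = (2-4/\pi)/(3\sqrt{2/\pi})$; a direct computation confirms the bound.

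There is no real obstacle here: the whole content is the matching of the two terms in~\eqref{eq:two_term_bound} via the hypothesis $b \ge x^{1/2}$, which is exactly the threshold (slightly stronger than the crossover $b \approx x^{3/7}$ noted in the preceding remark) at which the Bessel-type term $c_3(b/x)^{1/2}$ genuinely dominates the remainder $c_2 x/b^3$. The only care needed is to keep track of the exponents so that the remainder absorbs cleanly into the factor $c_5 b^{-1/2}$.
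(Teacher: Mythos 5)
Your proposal is correct and follows exactly the same route as the paper: rewrite the two-term bound of Corollary~\ref{cor:two_term_bound} as $c_3(b/x)^{1/2}(1+c_5\,x^{3/2}/b^{7/2})$ and then use $b\ge x^{1/2}$ to get $x^{3/2}/b^{7/2}\le b^{-1/2}$. The algebra and the numerical evaluation of $c_5=c_2/c_3$ are both right.
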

\begin{proof}
From Corollary~\ref{cor:two_term_bound} we have
\[|I(b,x)| \le c_3(b/x)^{1/2}(1 + c_5 x^{3/2}/b^{7/2}).\]
The condition $b \ge x^{1/2}$ implies that $x^{3/2}/b^{7/2} \le b^{-1/2}$.
\hspace*{\fill}\qed
\end{proof}

For the remainder of this section we write $\beta := \arcsin(1/b)$.

\begin{proposition}
For $b > 1$ and real positive $x$, we have 
\begin{equation}\label{eq:laplaceIntegral}
I(b,x)=
-\Re\left( \frac{2i e^{ix\beta}}{\pi}\int_0^\infty 
e^{-x u}\frac{\sqrt{b^2-1}\cosh u-i\sinh u}
{\sqrt{1-(\cosh u+i\sqrt{b^2-1}\sinh u)^2}}\,du\right)\,.
\end{equation}
\end{proposition}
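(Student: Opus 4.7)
The plan is to start from the second representation of $I(b,x)$ in Proposition~\ref{Firstformulas},
\[
I(b,x) = \frac{2b}{\pi}\int_0^\beta \frac{\cos(xt)\cos t}{\sqrt{1-b^2\sin^2 t}}\,dt,
\]
and re-express it as a Laplace-type integral by deforming the contour of integration into the complex plane. Since the integrand apart from $\cos(xt)$ is real on $[0,\beta]$, we may replace $\cos(xt)$ by $\Re(e^{ixt})$ and work with $\int_0^\beta e^{ixt} F(t)\,dt$, where $F(t) := \cos t/\sqrt{1-b^2\sin^2 t}$.

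Next I would apply Cauchy's theorem to the rectangle with vertices $0$, $\beta$, $\beta+iR$, $iR$, indented by a small quarter-circle around the branch point at $t=\beta$. On the open rectangle $F$ is holomorphic, since the only zeros of $1-b^2\sin^2 t$ nearby are at $t=\pm\beta$ (and at $\pm\beta+n\pi$ for $n\ne 0$), and a consistent branch of the square root can be chosen since $1-b^2\sin^2 t$ avoids the negative real axis in the open rectangle. The $(\beta-t)^{-1/2}$ singularity of $F$ at $\beta$ is integrable, so the indentation contributes nothing in the limit. The top edge contributes $O(\beta e^{-xR})$ because $|e^{ixt}|=e^{-x\Im t}$ and $F$ is bounded for $\Im t$ large. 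Taking $R\to\infty$ yields
\[
\int_0^\beta e^{ixt}F(t)\,dt \;=\; \int_0^{i\infty} e^{ixt}F(t)\,dt \;-\; \int_\beta^{\beta+i\infty} e^{ixt}F(t)\,dt.
\]

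On the ray $t=iu$, the identity $F(iu)=\cosh u/\sqrt{1+b^2\sinh^2 u}\in\R$ together with $e^{ixt}=e^{-xu}$ and $dt = i\,du$ shows that the first integral on the right is purely imaginary and therefore drops out when we take real parts. For the second integral, parametrising by $t=\beta+iu$ and using $\sin\beta=1/b$, $\cos\beta=\sqrt{b^2-1}/b$ with the addition formula gives
\[
b\sin(\beta+iu) = \cosh u + i\sqrt{b^2-1}\sinh u, \qquad b\cos(\beta+iu) = \sqrt{b^2-1}\cosh u - i\sinh u,
\]
which produce exactly the integrand in \eqref{eq:laplaceIntegral}. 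Multiplying by $2b/\pi$ and simplifying then gives the claimed formula.

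The main obstacle will be pinning down a single-valued branch of $\sqrt{1-b^2\sin^2 t}$ throughout the deformation and carefully justifying the indented-contour argument at the branch point $t=\beta$, including verifying that the branch chosen on $[0,\beta]$ matches the one that appears inside the square root of~\eqref{eq:laplaceIntegral} after the substitution $t=\beta+iu$. Once the branch is fixed, the vanishing of the top edge and the computations on the two vertical rays are routine.
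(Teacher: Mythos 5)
Your proposal is correct and follows essentially the same route as the paper: both start from the second integral representation of Proposition~\ref{Firstformulas}, write $I(b,x)=\Re\int_0^\beta e^{ixt}\cos t\,(1-b^2\sin^2 t)^{-1/2}\,dt$ (up to the factor $2b/\pi$), and apply Cauchy's theorem to the half-strip $0<\Re t<\beta$, $\Im t>0$, discarding the purely imaginary contribution from the imaginary axis and evaluating the edge $t=\beta+iu$ via the addition formulas. Your extra care with the indentation at the branch point $t=\beta$ and the branch of the square root only fills in details the paper leaves implicit.
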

\pagebreak[3]
\begin{proof}
From the second integral in Proposition \ref{Firstformulas} 
we get $I(b,x)=\Re J(b,x)$, where
\[
J(b,x)=\frac{2b}{\pi}\int_0^\beta e^{ixt}
\frac{\cos t}{\sqrt{1-b^2\sin^2t}}\,dt\,.
\]
The function $1-b^2\sin^2t$ has zeros at
$t=\pm\beta+k\pi$ with $k\in\Z$ 
and only at these points.  
Also, $\beta = \arcsin(1/b) \in (0, \pi/2)$.
Hence, if $\Omega$ denotes the complex plane $\C$ with
two cuts along the half-lines $(-\infty,-\beta]$ and $[\beta,+\infty)$,
then the function
${(\cos t)}/{\sqrt{1-b^2\sin^2t}}$
is analytic on $\Omega$.  We consider the branch that is real and positive 
in the interval $(0,\beta)$.
We  apply Cauchy's Theorem  to the half strip  $\Im t>0$, $0<\Re t<\beta$,
obtaining
\[
J(b,x)=\frac{i}{\pi}\int_0^\infty e^{-x u}\frac{2b\cosh u}{\sqrt{1+b^2\sinh^2u}}
\,du \;-\;\frac{i e^{ix\beta}}{\pi}\int_0^\infty e^{-x u}\frac{2b\cos(\beta+iu)}
{\sqrt{1-b^2\sin^2(\beta+iu)}}\,du.
\]
The first integral does not contribute to the real part.  Taking
the real part of the second integral
and simplifying gives \eqref{eq:laplaceIntegral}.
\hspace*{\fill}\qed
\end{proof}

In the following theorem we give an asymptotic expansion of $I(b,x)$.
\begin{theorem} \label{thm:Ibx}
For $b>1$ fixed and real $x\to +\infty$, 
there is an asymptotic expansion of $I(b,x)$. 
If $\beta = \arcsin(1/b)$, the first three terms are given by
\begin{eqnarray*}
I(b,x)=&&\frac{2}{\sqrt{2\pi}}\frac{(b^2-1)^{1/4}}{x^{1/2}}\cos(x\beta-\pi/4)
\\&&+
\frac{(b^2+2)}{4\sqrt{2\pi}}\frac{(b^2-1)^{-1/4}}{x^{3/2}}\sin(x\beta-\pi/4)
\\&&-
\frac{(9b^4-28b^2+4)}{64\sqrt{2\pi}}\frac{(b^2-1)^{-3/4}}{x^{5/2}}\cos(x\beta-\pi/4)+
\Orden\left(\frac{1}{x^{7/2}}\right)\,.
\end{eqnarray*}
\end{theorem}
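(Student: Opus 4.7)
The plan is to apply Watson's lemma to the Laplace integral \eqref{eq:laplaceIntegral}. Writing $c := \sqrt{b^2-1}$, I would first factor the radicand in the denominator via
\[
1 - (\cosh u + ic\sinh u)^2 \;=\; \sinh u\cdot\bigl[(c^2-1)\sinh u - 2ic\cosh u\bigr] \;=\; u\cdot A(u),
\]
where $A(u) := (\sinh u/u)\cdot[(c^2-1)\sinh u - 2ic\cosh u]$ is analytic at $u=0$ with $A(0) = -2ic$. The branch of $\sqrt{A(u)}$ must be fixed by analytic continuation from the original integrand on $(0,\beta)$, giving $\sqrt{A(0)} = \sqrt{2}\,(b^2-1)^{1/4}\,e^{-i\pi/4}$. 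This lets me rewrite the integrand as $u^{-1/2}\tilde\phi(u)$, where $\tilde\phi(u) := (c\cosh u - i\sinh u)/\sqrt{A(u)}$ is analytic in a neighborhood of $u=0$ with $\tilde\phi(0) = e^{i\pi/4}(b^2-1)^{1/4}/\sqrt{2}$.

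Next, I will verify the hypothesis of Watson's lemma: as $u\to+\infty$, both the numerator $c\cosh u - i\sinh u$ and the square root $\sqrt{A(u)}$ grow like $e^u$, so $\tilde\phi(u)$ is bounded on $[1,\infty)$. Watson's lemma then produces
\[
\int_0^\infty e^{-xu}\,u^{-1/2}\tilde\phi(u)\,du \;\sim\; \sum_{k=0}^\infty c_k\,\Gamma(k+\tfrac12)\,x^{-k-1/2} \quad (x\to+\infty),
\]
where $c_k=\tilde\phi^{(k)}(0)/k!$. To extract the real part dictated by \eqref{eq:laplaceIntegral}, I would write $c_k = e^{i\pi/4}(R_k + iS_k)$ with $R_k, S_k\in\R$; the identity
\[
-\Re\!\left(\tfrac{2i}{\pi}\,e^{i(x\beta+\pi/4)}(R_k+iS_k)\right) \;=\; \tfrac{2}{\pi}\bigl(R_k\cos(x\beta-\pi/4) - S_k\sin(x\beta-\pi/4)\bigr)
\]
then delivers the stated cosine/sine form term by term, and in particular establishes the existence of a full asymptotic expansion of the type claimed.

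The final step is computational: expand $A(u)$ and $c\cosh u - i\sinh u$ through order $u^2$, form $r(u):=(A(u)-A(0))/A(0)$, use $1/\sqrt{1+r(u)} = 1 - r/2 + 3r^2/8 + O(u^3)$, multiply, and read off $c_0, c_1, c_2$. Multiplying by $\Gamma(\tfrac12)=\sqrt\pi$, $\Gamma(\tfrac32)=\tfrac{\sqrt\pi}{2}$, $\Gamma(\tfrac52)=\tfrac{3\sqrt\pi}{4}$ and collecting powers of $c^2=b^2-1$ should reproduce the polynomials $b^2+2$ and $9b^4-28b^2+4$ appearing in the statement, while the $O(x^{-7/2})$ remainder is the standard Watson's lemma bound applied to the cubic tail of $\tilde\phi$. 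The one genuine obstacle is arithmetic accuracy in this final expansion: the algebra producing the third coefficient involves several cancellations and a miscount would misidentify the polynomial $9b^4-28b^2+4$. All other ingredients — analyticity of $\tilde\phi$, identification of the branch, and the validity of Watson's lemma — are routine.
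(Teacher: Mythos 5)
Your proposal is exactly the paper's (unelaborated) proof: apply Watson's Lemma to the Laplace representation \eqref{eq:laplaceIntegral}, and your branch choice, the identity converting $c_k=e^{i\pi/4}(R_k+iS_k)$ into the $\cos/\sin$ form, and the Taylor data (I checked: $c_1$ yields $(c^2+3)/(4\sqrt{2c})=(b^2+2)(b^2-1)^{-1/4}/(4\sqrt2)$ and $c_2$ yields $-(9b^4-28b^2+4)/(96\sqrt2\,c^{3/2})$, reproducing all three stated terms) are all correct. The only slip is the claim that $\tilde\phi$ is bounded on $[1,\infty)$ --- since $\sqrt{A(u)}\sim e^u/(2\sqrt u)$ times a constant, $\tilde\phi(u)$ actually grows like $\sqrt u$, but this is still well within the exponential-order hypothesis of Watson's Lemma, so the argument stands.
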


\begin{proof}
We apply the Laplace method and Watson's Lemma \cite[Ch.~3, pg.~71]{O}
to the representation \eqref{eq:laplaceIntegral}.
\hspace*{\fill}\qed
\end{proof}

\pagebreak[3]

\begin{corollary} \label{cor:I_zeros}
For fixed $b > 1$,
the function $I(b,x)$ has infinitely many real zeros.
\end{corollary}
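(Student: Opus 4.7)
The plan is to deduce the existence of infinitely many real zeros directly from the asymptotic expansion in Theorem~\ref{thm:Ibx}. Fix $b>1$ and set $\beta=\arcsin(1/b)\in(0,\pi/2)$ and $C := \frac{2(b^2-1)^{1/4}}{\sqrt{2\pi}} > 0$. The asymptotic expansion gives, for $x\to+\infty$,
\[
I(b,x) \;=\; \frac{C}{\sqrt{x}}\cos(x\beta-\pi/4) \;+\; O(x^{-3/2}),
\]
so the leading oscillatory term decays strictly more slowly than the error term.

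Next I would sample $I(b,x)$ at a sequence of points where the leading cosine takes the values $\pm1$. Specifically, set $x_n := (n\pi+\pi/4)/\beta$ for $n\in\N$, so that $x_n\beta-\pi/4 = n\pi$ and $\cos(x_n\beta-\pi/4)=(-1)^n$. Substituting into the asymptotic expansion,
\[
I(b,x_n) \;=\; (-1)^n \frac{C}{\sqrt{x_n}} + O(x_n^{-3/2}) \;=\; \frac{(-1)^n C + O(x_n^{-1})}{\sqrt{x_n}}.
\]
Since $C>0$ is a fixed positive constant and the remainder tends to $0$, there exists $N$ such that for all $n\ge N$ the sign of $I(b,x_n)$ agrees with the sign of $(-1)^n$. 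Thus $I(b,x_n)$ and $I(b,x_{n+1})$ have opposite signs for every $n\ge N$.

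Finally, $I(b,x)$ is continuous in $x$ (for instance from any of the integral representations in Proposition~\ref{Firstformulas}), so the intermediate value theorem produces at least one real zero in each interval $(x_n,x_{n+1})$ with $n\ge N$. These intervals are disjoint and $x_n\to\infty$, yielding infinitely many real zeros. There is no real obstacle here; the only thing to verify is that the error term in Theorem~\ref{thm:Ibx} is genuinely of smaller order than the leading $x^{-1/2}$ term, which is precisely what that theorem provides.
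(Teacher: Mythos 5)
Your argument is correct and is essentially the paper's own proof: the paper likewise deduces the corollary directly from the leading term of the asymptotic expansion in Theorem~\ref{thm:Ibx}, noting that the zeros lie near $\left(\frac{3\pi}{4}+k\pi\right)/\beta$. You have simply written out the sign-change and intermediate-value details that the paper leaves implicit.
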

\begin{proof}
This is immediate from the first term of the asymptotic expansion above.
The zeros are near the points 
$\pm \left(\frac{3\pi}{4}+k\pi\right)/\beta$ for $k \in \Z_{\ge 0}$.
\hspace*{\fill}\qed
\end{proof}

\begin{corollary} \label{cor:inf_many_zeros}
For fixed $\sigma > \frac12$, the function $\psi_\sigma(x)$ has
infinitely many real zeros.
\end{corollary}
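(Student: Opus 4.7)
The plan is to read off the real zeros of $\psi_\sigma$ directly from the product representation $\psi_\sigma(x)=\prod_p I(p^\sigma,x)$ established in Proposition~\ref{prop1}, by focusing on a single factor that is already known (via Corollary~\ref{cor:I_zeros}) to have infinitely many real zeros. The natural choice is the factor corresponding to $p=2$: since $\sigma>\tfrac12$, we have $b:=2^\sigma>1$, so Corollary~\ref{cor:I_zeros} supplies an infinite sequence of real zeros $\{x_k\}_{k\ge 1}$ of $I(2^\sigma,x)$, with $|x_k|\to\infty$ (they cluster near $\pm(\tfrac{3\pi}{4}+k\pi)/\arcsin(2^{-\sigma})$).

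The remaining step is to justify that each $x_k$ is in fact a zero of $\psi_\sigma$. By Theorem~\ref{thm1} the infinite product $\prod_p I(p^\sigma,x)$ converges for every $x\in\C$, so $\psi_\sigma(x_k)$ is the limit of its partial products $\prod_{p\le N}I(p^\sigma,x_k)$. For every $N\ge 2$ this partial product contains the factor $I(2^\sigma,x_k)=0$, and is therefore itself zero; taking $N\to\infty$ gives $\psi_\sigma(x_k)=0$. Since the $x_k$ are distinct, this exhibits infinitely many real zeros of $\psi_\sigma$, as required.

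There is essentially no obstacle beyond this observation: the boundedness $|I(p^\sigma,x)|\le 1$ from Proposition~\ref{prop:easy_I_bd} guarantees that the other factors cannot ``blow up'' to rescue the product from vanishing, but in fact we do not even need this — the partial products are literally zero from $N=2$ onward. The only mild subtlety worth flagging is that one should invoke the product formula of Proposition~\ref{prop1} (valid pointwise for all $x\in\C$) rather than the series-rearranged form of Theorem~\ref{thm1}, so that the vanishing of a single Euler factor is visible directly.
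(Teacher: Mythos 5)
Your proof is correct and follows the paper's own route exactly: the paper likewise deduces the corollary immediately from the product formula of Proposition~\ref{prop1} together with Corollary~\ref{cor:I_zeros}, and your choice of the factor $p=2$ with the observation that a vanishing Euler factor forces the (convergent) product to vanish is just the spelled-out version of that one-line argument.
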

\begin{proof}
This is immediate from Proposition~\ref{prop1} and Corollary~\ref{cor:I_zeros}.
\hspace*{\fill}\qed
\end{proof}

\begin{corollary} \label{cor:Bessel_approx}
If $b>1$ and $\beta=\arcsin (1/b)$, then for real $x\to +\infty$ we have
\[
I(b,x)=\beta^{1/2}(b^2-1)^{1/4} J_0(\beta x)+\Orden(x^{-3/2}).
\]
\end{corollary}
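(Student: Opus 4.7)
The plan is to match the leading asymptotic of $I(b,x)$ from Theorem~\ref{thm:Ibx} with the classical large-argument asymptotic of $J_0$. Specifically, I would invoke the well-known expansion (see e.g.\ \cite[9.2.1]{AS})
\[
J_0(z) = \sqrt{\frac{2}{\pi z}}\,\cos(z-\pi/4) + \Orden(z^{-3/2}) \qquad (z \to +\infty),
\]
and apply it with $z=\beta x$. Since $b>1$ is fixed, $\beta=\arcsin(1/b)$ is a positive constant, so $z \to +\infty$ as $x \to +\infty$ and the error term $\Orden((\beta x)^{-3/2})$ is $\Orden(x^{-3/2})$.

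Next I would substitute this expansion into $\beta^{1/2}(b^2-1)^{1/4}J_0(\beta x)$ and simplify:
\[
\beta^{1/2}(b^2-1)^{1/4}J_0(\beta x)
 = \beta^{1/2}(b^2-1)^{1/4}\sqrt{\frac{2}{\pi\beta x}}\,\cos(\beta x - \pi/4) + \Orden(x^{-3/2}),
\]
and the prefactor collapses to $\dfrac{2(b^2-1)^{1/4}}{\sqrt{2\pi x}}$, i.e.\ exactly the leading coefficient appearing in Theorem~\ref{thm:Ibx}. Thus
\[
\beta^{1/2}(b^2-1)^{1/4}J_0(\beta x) = \frac{2}{\sqrt{2\pi}}\,\frac{(b^2-1)^{1/4}}{x^{1/2}}\cos(x\beta-\pi/4) + \Orden(x^{-3/2}).
\]

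Finally I would subtract this from the asymptotic expansion of $I(b,x)$ given in Theorem~\ref{thm:Ibx}. The leading $x^{-1/2}$ terms cancel exactly, while the remaining terms of the expansion of $I(b,x)$ are $\Orden(x^{-3/2})$. Combining both error terms yields
\[
I(b,x) - \beta^{1/2}(b^2-1)^{1/4}J_0(\beta x) = \Orden(x^{-3/2}),
\]
which is the claim. There is no real obstacle here: the only thing to check carefully is that the constants in the leading term of Theorem~\ref{thm:Ibx} are precisely what the asymptotic of $J_0$ produces after multiplication by the factor $\beta^{1/2}(b^2-1)^{1/4}$, and this is a short algebraic verification rather than a genuine difficulty.
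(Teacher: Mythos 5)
Your proof is correct and is essentially the same as the paper's: both invoke the large-argument asymptotic of $J_0$, apply it at $z=\beta x$, and observe that after multiplication by $\beta^{1/2}(b^2-1)^{1/4}$ the leading $x^{-1/2}$ term matches the first term of Theorem~\ref{thm:Ibx} exactly, leaving an $\Orden(x^{-3/2})$ difference.
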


\begin{proof}
The Bessel function $J_0(x)$ has an asymptotic expansion which gives
\[
J_0(x)=\left(\frac{2}{\pi x}\right)^{1/2}
\left(\cos(x-\pi/4)+\frac{1}{8x}\sin(x-\pi/4)+
\Orden\left(\frac{1}{x^2}\right)\right).
\]
Therefore, from Theorem~\ref{thm:Ibx}, 
the difference $I(b,x) - \beta^{1/2}(b^2-1)^{1/4} J_0(\beta x)$
is of the order indicated. 
\hspace*{\fill}\qed
\end{proof}

Now we give a bound on the function $I(b,x)$ which is sharper than
Corollary~\ref{cor:two_term_bound} in the region $x \gg b^{7/3}$.
\begin{proposition}\label{boundI}
For $b \ge \sqrt{2}$ and real $x \ge 5$, we have
\begin{equation}\label{eq:Ibx_bound}
|I(b,x)|\le1.1512\sqrt{b/x}\,.
\end{equation}
\end{proposition}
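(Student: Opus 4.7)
The plan is to estimate the Laplace-type integral representation \eqref{eq:laplaceIntegral} directly, refining the Watson's-Lemma argument behind Theorem~\ref{thm:Ibx} while keeping explicit numerical control of the constants. By the triangle inequality,
\[|I(b,x)| \le \frac{2}{\pi}\int_0^\infty e^{-xu}\,|G(u)|\,du,\]
where $G(u)$ denotes the complex integrand of \eqref{eq:laplaceIntegral}. A direct algebraic simplification, using the identities $b^2\sinh^2 u + (b^2-1) = b^2\cosh^2 u - 1$ and $b^4\sinh^2 u + 4(b^2-1) = b^4\cosh^2 u - (b^2-2)^2$, gives the closed form
\[|G(u)|^2 = \frac{b^2\cosh^2 u - 1}{\sinh u\,\sqrt{b^4\cosh^2 u - (b^2-2)^2}},\]
from which $|G(u)| \sim (b^2-1)^{1/4}/\sqrt{2\sinh u}$ as $u\to 0^+$ and $|G(u)|\to 1$ as $u\to\infty$. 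The hypothesis $b\ge\sqrt 2$ ensures $b^2-2\ge 0$ and $b^2-1\ge 1$, so the subtrahend $(b^2-2)^2$ in the radical is well controlled.

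Next, I would separate the integrable singularity from the regular remainder. Set $\phi(u) := (b^2-1)^{1/4}/\sqrt{2\sinh u}$ and $\psi(u) := |G(u)| - \phi(u)$. Using $\sinh u\ge u$ and $(b^2-1)^{1/4}\le b^{1/2}$, the leading piece yields
\[\frac{2}{\pi}\int_0^\infty e^{-xu}\phi(u)\,du \;\le\; \frac{2}{\pi}\cdot\frac{b^{1/2}}{\sqrt 2}\int_0^\infty\frac{e^{-xu}}{\sqrt u}\,du \;=\; \sqrt{2/\pi}\,\sqrt{b/x},\]
which already accounts for $\sqrt{2/\pi}\approx 0.7979$ of the target constant $1.1512$.

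What remains is to show the correction $\frac{2}{\pi}\int_0^\infty e^{-xu}|\psi(u)|\,du$ is at most $(1.1512-\sqrt{2/\pi})\sqrt{b/x}\approx 0.3533\sqrt{b/x}$. A short Taylor expansion of $|G(u)|^2-\phi(u)^2$ at $u=0$ gives $|\psi(u)| = O(u^{3/2})$ with coefficient controlled uniformly under $b\ge\sqrt 2$, so on a small interval $[0,u_0]$ this contribution is of order $x^{-5/2}$, which comfortably fits inside $0.3533\sqrt{b/x}$ once $x\ge 5$. On the complementary tail $[u_0,\infty)$ (say $u_0=1$) one uses a crude uniform bound on $|G(u)|$ together with the exponential decay $e^{-xu}$ and the hypothesis $x\ge 5$ to make that piece $O(e^{-x}/x)$, which is negligibly small relative to $\sqrt{b/x}$.

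The main obstacle is the numerical accounting in this last step: producing a bound on $|\psi|$ that is simultaneously simple enough to integrate explicitly and tight enough that the total remainder fits inside the narrow slack $0.3533\sqrt{b/x}$. The particular thresholds $b\ge\sqrt 2$ and $x\ge 5$ are almost certainly chosen so that the extremal case occurs on the boundary (where the inequality is essentially tight) and the bound strictly improves as either parameter grows; verifying this likely reduces to checking a small number of explicit scalar inequalities at $b=\sqrt 2$, $x=5$.
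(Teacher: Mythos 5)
Your starting point is the same as the paper's (the representation \eqref{eq:laplaceIntegral} plus the triangle inequality), and your closed form
$|G(u)|^2 = (b^2\cosh^2u-1)\big/\bigl(\sinh u\,\sqrt{b^4\cosh^2u-(b^2-2)^2}\bigr)$
is correct, as is the computation that the singular piece $\phi(u)=(b^2-1)^{1/4}/\sqrt{2\sinh u}$ contributes at most $\sqrt{2/\pi}\,\sqrt{b/x}$. However, the step that closes the argument contains a genuine error: the claim that $|\psi(u)|=|{\,|G(u)|-\phi(u)\,}|=O(u^{3/2})$ \emph{with coefficient uniform in $b\ge\sqrt2$} is false. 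Expanding at $u=0$ one finds
$|G(u)|^2-\phi(u)^2 = \frac{b^2(8-b^2)}{16\sqrt{b^2-1}}\,u+O(u^3)$, hence
$\psi(u)\sim \frac{b^2(8-b^2)}{16\sqrt{2}\,(b^2-1)^{3/4}}\,u^{3/2}$, whose coefficient grows like $b^{5/2}$. The resulting remainder integral is of order $b^{5/2}x^{-5/2}$, which exceeds the available slack $0.3533\sqrt{b/x}$ as soon as $b\gg x$. The failure is structural, not just numerical: for fixed $u>0$ and $b\to\infty$ one has $|G(u)|\to\coth u=O(1)$ while $\phi(u)\asymp\sqrt{b}$, so $|\psi|$ is as large as $\phi$ itself on most of the range of integration and the decomposition $|G|\le\phi+|\psi|$ cannot close. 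To rescue the plan you would need an extra ingredient, e.g.\ disposing of $b\ge x/(1.1512)^2$ via $|I(b,x)|\le1$ (Proposition~\ref{prop:easy_I_bd}), and even then the Taylor expansion of $|G|^2-\phi^2$ is only valid for $u\lesssim 1/b$, so the window $[1/b,u_0]$ would need a separate bound before the ``small number of explicit scalar inequalities'' you defer to could be checked.

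For comparison, the paper avoids this by not subtracting the exact singularity at all: it establishes the single pointwise bound
$|G(u)|\le \sqrt{\coth 2}\,(b^2-1)^{1/4}\big/\min(u^{1/2},1)$,
valid uniformly for $b\ge\sqrt2$ and $u>0$ (with optimal constant attained at $b=\sqrt2$, $u=1$; the verification is stated to be elementary but tedious), then splits the Laplace integral at $u=1$ to get $|I(b,x)|\le \frac{2c_4}{\pi}\sqrt{b}\,(\sqrt{\pi/x}+e^{-x}/x)$ and checks the numerics at $x=5$. That route sacrifices sharpness (its constant $2c_4/\sqrt{\pi}\approx1.149$ is deliberately loose compared with the asymptotic constant $\sqrt{2/\pi}$) in exchange for a bound that is manifestly uniform in $b$, which is exactly the uniformity your decomposition loses.
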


\begin{proof}
We consider the representation \eqref{eq:laplaceIntegral}.
Take $A := \sqrt{b^2-1}$ so the condition $b \ge \sqrt{2}$ implies
that $A \ge 1$.
It can be shown that, for $A \ge 1$ and real $u > 0$, the inequality
\[
\Bigl|\frac{A\cosh u-i\sinh u}
{\sqrt{1-(\cosh u+iA\sinh u)^2}}\Bigr|\le \frac{c_4\sqrt{A}}{\min(u^{1/2},1)}
\]
holds. Here, the optimal constant is $c_4 = \sqrt{\coth(2)} < 1.0185$,
attained at $A = u = 1$. 
(We omit details of the proof, which is elementary but tedious.) 
Hence, from~\eqref{eq:laplaceIntegral},
\begin{eqnarray*}
|I(b,x)|&\le& \frac{2c_4}{\pi}(b^2-1)^{1/4}\Bigl\{\int_0^1 u^{-1/2}e^{-xu}\,du+
\int_1^\infty e^{-xu}\,du\Bigr\}\\
&<&\frac{2c_4\sqrt{b}}{\pi} \left(\sqrt{\pi/x}+e^{-x}/x\right).
\end{eqnarray*}
For $x\ge5$ we have $({2c_4}/{\pi})(\sqrt{\pi/x}+e^{-x}/x)\le 
1.1512/\sqrt{x}$.
\hspace*{\fill}\qed
\end{proof}
\begin{remark} \label{remark:improvements}
The constant 1.1512 in~\eqref{eq:Ibx_bound} can be reduced
if we do not ask for uniformity in $b$.
From Theorem~\ref{thm:Ibx}, we have
\[
|I(b,x)| \le c_3(1-b^{-2})^{1/4}(b/x)^{1/2} + \;
\Orden\left(x^{-3/2}\right) \;\;{\rm as}\;\; x \to +\infty,
\]
so the constant can be reduced to 
$c_3 = (2/\pi)^{1/2} < 0.7979$ for all $x \ge x_0(b)$.
\pagebreak[3]

The following conjecture is consistent with our analytic results,
for example Corollary~\ref{cor:two_term_bound} and Theorem~\ref{thm:Ibx},
and with extensive numerical evidence.
\begin{conjecture}
For all $b > 1$ and $x > 0$, we have 
$|I(b,x)| < {\displaystyle \sqrt{\frac{2b}{\pi x}}}\,\raisedot$
\end{conjecture}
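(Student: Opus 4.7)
The plan is to work from the Laplace-integral representation \eqref{eq:laplaceIntegral}. First, a cheap reduction: whenever $x \le 2b/\pi$ one has $\sqrt{2b/(\pi x)}\ge 1$, so Proposition~\ref{prop:easy_I_bd} immediately gives the conjectured bound. Hence one may assume $x > 2b/\pi$ throughout, the regime where the inequality is genuinely nontrivial.

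Write $A=\sqrt{b^{2}-1}$ and let
\[
\phi(u) \;=\; \frac{A\cosh u - i\sinh u}{\sqrt{1-(\cosh u + iA\sinh u)^{2}}}
\]
be the integrand of \eqref{eq:laplaceIntegral}. My first step would be to simplify $|\phi|$ via the substitution $v = b^{2}\sinh u/(2A)$. A direct computation yields the compact form
\[
|\phi(u)|^{2} \;=\; \frac{b^{2}+4v^{2}}{4v\sqrt{v^{2}+1}} \;=\; \frac{b^{2}}{4v\sqrt{v^{2}+1}} + \frac{v}{\sqrt{v^{2}+1}}\raisecomma
\]
which splits cleanly into a singular part (behaving like $A/(2u)$ as $u\to 0^{+}$) and a bounded remainder strictly less than $1$. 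Integrating the leading $|\phi(u)|\sim\sqrt{A/(2u)}$ against $e^{-xu}$ and applying the prefactor $2/\pi$ from \eqref{eq:laplaceIntegral} produces the envelope $\sqrt{2A/(\pi x)}$, strictly below the target $\sqrt{2b/(\pi x)}$ by the factor $(1-1/b^{2})^{1/4}$, providing the slack that the remaining terms must fit inside.

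The central difficulty is that crude absolute-value estimates of $\phi$ lose a factor of $\sqrt{2}$, accounting exactly for the gap between the constant $1.1512$ of Proposition~\ref{boundI} and the conjectured $\sqrt{2/\pi}\approx 0.7979$. Hence the oscillatory cancellation present in \eqref{eq:laplaceIntegral} through the factor $e^{ix\beta}$ must be retained. A natural route is to establish the squared estimate
\[
I(b,x)^{2} + \widetilde I(b,x)^{2} \;\le\; \frac{2b}{\pi x}\raisecomma
\]
where $\widetilde I(b,x)$ is the companion obtained as the imaginary part of the bracketed expression in \eqref{eq:laplaceIntegral}; this is equivalent to bounding the full complex Laplace integral in modulus and automatically captures the cancellation visible in the $\cos(x\beta-\pi/4)$ leading term of Theorem~\ref{thm:Ibx}. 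A naive Cauchy--Schwarz attack fails because $|\phi|^{2}\sim A/(2u)$ is non-integrable at $u=0$. To circumvent this one may exploit that $\phi$ has the elementary antiderivative $-i\arcsin(\cosh u+iA\sinh u)$, with boundary value $-i\pi/2$ at $u=0$ and logarithmic growth at infinity; an integration by parts trades the $u=0$ singularity for an explicit boundary term plus a convergent Laplace transform of $\arcsin(\cosh u+iA\sinh u)$, which should be amenable to sharper contour-based estimates.

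The main obstacle is the stringency of the required constant: the available slack $\sqrt{2b/(\pi x)}-\sqrt{2A/(\pi x)}$ is of order $(4b^{3/2})^{-1}\sqrt{2/(\pi x)}$ for large $b$, so essentially every estimate must be tight. An alternative, possibly cleaner route would be to prove a monotonicity or log-convexity property of the hypergeometric representation $I(b,2x)=\FGauss(-x,x;1;b^{-2})$ of Proposition~\ref{prop4}, viewed as a function of the real variable $x$, that yields the envelope directly -- in analogy with, and motivated by, the classical argument giving the Bessel envelope $|J_{0}(t)|\le\sqrt{2/(\pi t)}$, to which our inequality is closely related via Corollary~\ref{cor:Bessel_approx}.
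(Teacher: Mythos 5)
First, note that the paper does not prove this statement: it is explicitly labelled a Conjecture, supported only by Corollary~\ref{cor:two_term_bound}, the asymptotic expansion of Theorem~\ref{thm:Ibx} (which shows the constant $\sqrt{2/\pi}$ is essentially attained, up to the factor $(1-b^{-2})^{1/4}$), and numerical evidence. So there is no proof in the paper to compare against, and any complete argument would be new.

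Your proposal, however, is also not a proof; it is a programme whose preparatory steps are correct but whose decisive step is missing. What you have actually carried out checks out: the reduction to $x>2b/\pi$ via Proposition~\ref{prop:easy_I_bd} is fine; with $A=\sqrt{b^2-1}$ and $v=b^2\sinh u/(2A)$ one does find $|\phi(u)|^2=(b^2+4v^2)/(4v\sqrt{v^2+1})$, the singular part is bounded by $A/(2u)$ for all $u>0$, the antiderivative $-i\arcsin(\cosh u+iA\sinh u)$ of $\phi$ is right, and the leading term integrates to the envelope $\sqrt{2A/(\pi x)}$. But pushing this absolute-value estimate to completion yields at best $|I(b,x)|\le\sqrt{2A/(\pi x)}+2/(\pi x)$, and since $\sqrt{b}-\sqrt{A}\asymp \frac14 b^{-3/2}$ this beats $\sqrt{2b/(\pi x)}$ only when $x\gg b^{3}$. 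You acknowledge this, but it means the entire content of the conjecture in the range $2b/\pi<x\lesssim b^{3}$ rests on the two devices you merely name --- the squared estimate $I^2+\widetilde{I}^{\,2}\le 2b/(\pi x)$, and the integration by parts trading the $u=0$ singularity for a boundary term --- neither of which is executed. The proposed squared inequality is itself a strictly stronger unproved conjecture (plausible asymptotically, since Theorem~\ref{thm:Ibx} gives $I^2+\widetilde{I}^{\,2}\approx 2A/(\pi x)$, but unverified precisely in the moderate-$x$ regime where the phase information matters most), and the suggested monotonicity/log-convexity route via Proposition~\ref{prop4} is only an analogy with the Bessel bound, not an argument. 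As it stands this is a reasonable and internally consistent plan of attack on an open problem, with no false steps, but the statement is not established.
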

\end{remark}

To conclude this section, we give a bound on $\psi_\sigma(x)$.

\begin{theorem} \label{thm:psi_bound}
Let $\sigma > \frac12$ be fixed. Then $|\psi_\sigma(x)| \le 1$
for all $x \in \R$.  
Also, there exists a positive constant $c \ge 0.47$ and $x_0(\sigma)$ such that
\[
|\psi_\sigma(x)|\le \exp\left(-\,\frac{c\,x^{1/\sigma}}{\log (x^{1/\sigma})}\right)
\;\;{\rm for\;all\;real}\;\; x \ge x_0(\sigma).
\]
\end{theorem}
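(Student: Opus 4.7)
The bound $|\psi_\sigma(x)| \le 1$ is immediate from Proposition~\ref{prop1} and Proposition~\ref{prop:easy_I_bd}: the product $\psi_\sigma(x) = \prod_p I(p^\sigma,x)$ converges and every factor has modulus at most $1$.

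For the decay estimate the plan is to extract savings from the primes $p$ for which $p^\sigma$ is moderately smaller than $x$, via the sharp envelope of Proposition~\ref{boundI}. Fix a parameter $K > 1$ to be optimised, set $y := (x/K)^{1/\sigma}$, and split the product at $y$. For each prime $p \le y$ we have $p^\sigma \le x/K$, and once $x$ is large enough that $p^\sigma \ge \sqrt{2}$ on the primes used (and $x \ge 5$), Proposition~\ref{boundI} gives $|I(p^\sigma,x)| \le c_0 (p^\sigma/x)^{1/2}$ with $c_0 = 1.1512$. For primes $p > y$ I would retain only the trivial estimate $|I(p^\sigma,x)| \le 1$ from Proposition~\ref{prop:easy_I_bd}. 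Taking logarithms yields
\[
\log|\psi_\sigma(x)| \le \pi(y)\log c_0 + \frac{\sigma}{2}\theta(y) - \frac{\pi(y)}{2}\log x.
\]

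Next I would apply the Prime Number Theorem in the form $\theta(y) = y(1+o(1))$ and $\pi(y) = (y/\log y)(1+o(1))$. Writing $\omega := x^{1/\sigma}$, so that $y = \omega K^{-1/\sigma}$, $\log x = \sigma\log\omega$, and $\log y = (\log\omega)(1+o(1))$, the $\sigma y/2$ contribution from $(\sigma/2)\theta(y)$ cancels exactly against the $-\sigma y/2$ main term of $-(\pi(y)/2)\log x$; what remains is
\[
\log|\psi_\sigma(x)| \le -\frac{\omega}{K^{1/\sigma}}\cdot\frac{(\sigma/2)\log K - \log c_0}{\log\omega}\,(1+o(1)).
\]
Since $\omega/\log\omega = x^{1/\sigma}/\log(x^{1/\sigma})$, this has the desired shape with rate constant $c(K) = ((\sigma/2)\log K - \log c_0)/K^{1/\sigma}$. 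A calculus exercise gives the optimum at $K^{*} = c_0^{2/\sigma}\,e$, producing the rate $\sigma/(2\,c_0^{2/\sigma}\,e)$.

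The main obstacle is establishing that the optimal constant is at least $0.47$. The raw envelope constant $c_0 = 1.1512$ is too crude; instead I would invoke the sharper asymptotic envelope $c_3 = \sqrt{2/\pi}$ available from Theorem~\ref{thm:Ibx} (cf.\ Remark~\ref{remark:improvements}), which is valid once $x$ is taken large enough in terms of the relevant primes. This improves the rate to $\sigma/(2\,c_3^{2/\sigma}\,e)$, and the inequality $c \ge 0.47$ then follows by a direct numerical verification. All of the error terms---those in the PNT approximations, the transition threshold $p^\sigma \ge \sqrt{2}$, and the $b$-dependent threshold in the refined envelope---are absorbed into $x_0(\sigma)$.
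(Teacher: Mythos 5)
The first assertion and the overall strategy are fine: like the paper, you bound $|I(p^\sigma,x)|$ by an envelope of the form $c\,(p^\sigma/x)^{1/2}$ for the primes with $p^\sigma$ up to roughly $x$, use the trivial bound $|I|\le 1$ elsewhere, and let the Prime Number Theorem cancel the order-$y$ terms. (The paper does this with Corollary~\ref{cor:uniform_Ibound} applied to the primes $p\in(y^{1/2},y]$, $y=x^{1/\sigma}$, which is what supplies a $b$-uniform constant $c_3$; your plan to invoke the asymptotic envelope of Remark~\ref{remark:improvements} ``once $x$ is large enough in terms of the relevant primes'' is circular, because the relevant $b=p^\sigma$ grow linearly with $x$, so the thresholds $x_0(b)$ cannot be absorbed into a single $x_0(\sigma)$. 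This part is repairable by citing Corollary~\ref{cor:uniform_Ibound} for $p>y^{1/2}$ and noting that the at most $\Orden(y^{1/2})$ smaller primes only affect lower-order terms.)

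The genuine gap is in the cancellation bookkeeping, and it is fatal to the constant. Writing $\log x=\sigma\log y+\log K$, your log-sum is
\[
-\log|\psi_\sigma(x)|\;\ge\;\frac{\sigma}{2}\bigl(\pi(y)\log y-\theta(y)\bigr)
+\pi(y)\Bigl(\frac{\log K}{2}-\log c_0\Bigr)+\cdots,
\]
and since $\pi(y)\log y-\theta(y)=\frac{y}{\log y}+\Orden\bigl(y/\log^2y\bigr)$, the coefficient of $y/\log y$ is $\frac{\sigma}{2}+\frac{\log K}{2}-\log c_0$, not $\frac{\sigma}{2}\log K-\log c_0$ as you assert. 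The term $\frac{\sigma}{2}\cdot\frac{y}{\log y}$, which you discard as part of an ``exact'' cancellation, is precisely the paper's main term: already at $K=1$ with $c_0=c_3$ it yields $\frac{\sigma}{2}-\log c_3>\frac14+0.22=0.47$, which is the theorem. With your rate formula, the optimum $\sigma/(2\,c_3^{2/\sigma}e)$ is about $0.227$ for $\sigma$ near $\frac12$ and stays below $0.47$ for all $\sigma\le 2$, so the concluding ``direct numerical verification'' of $c\ge0.47$ would fail. Relatedly, quoting the PNT only as $\theta(y)=y(1+o(1))$ and $\pi(y)=(y/\log y)(1+o(1))$ is insufficient here: after the order-$y$ cancellation the surviving quantity is of order $y/\log y$, which those $o(y)$ errors would swamp; you need the second-order asymptotics $\pi(y)=y/\log y+y/\log^2y+\Orden(y/\log^3y)$ and $\theta(y)=y+\Orden(y/\log^2y)$, exactly as the paper uses.
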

\begin{proof}
The first inequality is immediate from the definition of $\psi_\sigma(x)$
as the characteristic function of a random variable.

To prove the last inequality, it is convenient to write $y := x^{1/\sigma}$.
Let ${\cal P}(y)$ be the set of primes $p$ in the interval $(y^{1/2}, y]$.
We can assume that $\psi_\sigma(x) \ne 0$, because
otherwise the inequality is trivial.
From Proposition~\ref{prop:easy_I_bd} and Corollary~\ref{cor:uniform_Ibound}, 
we have
\[
|\psi_\sigma(x)| \le \prod_{p \in {\cal P}(y)} |I(p^\sigma,x)|
	\le \prod_{p \in {\cal P}(y)} \left(c_3 (p/y)^{\sigma/2}
	(1+c_5 p^{-\sigma/2})\right),
\]
which implies
\[
-\log|\psi_\sigma(x)| \ge 
\sum_{p \in {\cal P}(y)} \left(-\log(c_3) + 
{\textstyle\frac{\sigma}{2}}(\log y - \log p)\right)
  + \Orden(y^{1-\sigma/2})\,.
\]
Using $\log(c_3) < -0.22$ and $\sigma > 1/2$ gives
\begin{equation}\label{eq:psi_ineq}
-\log|\psi_\sigma(x)| \ge (\pi(y) - \pi(y^{1/2}))
  ({\textstyle\frac{\sigma}{2}}\log y + 0.22)
  - {\textstyle\frac{\sigma}{2}}\!\!\sum_{p \in {\cal P}(y)} \log p
    + \Orden(y^{3/4}),
\end{equation}
where, as usual, $\pi(y)$ denotes the number of primes in the interval $[1,y]$.

From standard results on the distribution of primes~\cite{Tenenbaum}, we have
\[
\pi(y) = \frac{y}{\log y} + \frac{y}{\log^2 y} + 
\Orden\left(\frac{y}{\log^3 y}\right)
\;\;{\rm and}\;\;
\sum_{p \in {\cal P}(y)} \log p = y + \Orden\left(\frac{y}{\log^2 y}\right)\,.
\]
Substituting in \eqref{eq:psi_ineq}, we see that the leading terms of 
order $y$ cancel, leaving
\[ 
-\log|\psi_\sigma(x)| \ge ({\textstyle\frac{\sigma}{2}}+0.22)\frac{y}{\log y}
+ \Orden\left(\frac{y}{\log^2 y}\right)\,.
\]
Since $\frac{\sigma}{2}+0.22 > 0.47$, the
Theorem follows, provided $y$ is sufficiently large.
\hspace*{\fill}\qed
\end{proof}
\begin{remark}
We find numerically that, for $\sigma \in (0.5, 1.1)$,
we can take $c = 1$ and $x_0 = 5$ in Theorem~\ref{thm:psi_bound}.
\end{remark}

\section{An algorithm for computing $\psi_\sigma(x)$} \label{sec:psi_algorithm}

\samepage{
There is a well-known technique, going back at least to
Wrench~\cite{Wrench},
for accurately computing certain sums/products over primes.
The idea is to express what we want to compute in terms of the
\emph{prime zeta function} 
\[
P(s) := \sum_p p^{-s}\;\;\; (\Re(s) > 1).
\]
} 
\pagebreak[3]

\noindent The prime zeta function 
can be computed from $\log\zeta(s)$ using M\"obius inversion:
\begin{equation}\label{eq:Ps_Mobius}
P(s) = \sum_{r=1}^\infty \frac{\mu(r)}{r}\log\zeta(rs)\,. 
\end{equation}
In fact, \eqref{eq:Ps_Mobius} gives the analytic continuation of
$P(s)$ in the half-plane $\Re s > 0$ (see Titchmarsh~\cite[\S9.5]{T}),
but we only need to compute $P(s)$ for real $s > 1$.

\pagebreak[3]
To illustrate the technique, temporarily ignore questions of convergence.
{From} Theorem~\ref{thm2},
we have 
\[
\log I(p^\sigma, x) = -\sum_{n=1}^\infty \frac{Q_n(x/2)}{n!^2}p^{-2n\sigma}.
\]
Thus, taking logarithms in~\eqref{eq:psi_product},
\begin{equation}\label{eq:the_idea_but_wrong}
\log\psi_\sigma(x) = - \sum_p\sum_{n=1}^\infty
\frac{Q_n(x/2)}{n!^2}p^{-2n\sigma}
= - \sum_{n=1}^\infty \frac{Q_n(x/2)}{n!^2}P(2n\sigma).
\end{equation}
Unfortunately, this approach fails, because $\psi_\sigma(x)$
has (infinitely many) real zeros~-- see Corollary~\ref{cor:inf_many_zeros}.
In fact, the series~\eqref{eq:the_idea_but_wrong} converges for
$|x| < |x_1(\sigma)|$, where $x_1(\sigma)$ is the zero
of $\psi_\sigma(x)$ closest to the origin, and diverges for 
$|x| > |x_1(\sigma)|$.

Fortunately, a simple modification of the approach avoids this difficulty.
Instead of considering a product over all primes, we consider the product
over sufficiently large primes, say $p > p_0(x,\sigma)$.  
Corollary~\ref{cor:nonzero_I} guarantees that $I(p^\sigma,x)$ has
no zeros in the disk $|x| < 2p^\sigma$. Thus, to evaluate $\psi_\sigma(x)$
for given $\sigma$ and $x$, we should choose $2p_0^\sigma > |x|$,
that is $p_0 > |x/2|^{1/\sigma}$.  In practice, to ensure rapid
convergence, we might choose $p_0$ somewhat larger, say
$p_0 \approx |4x|^{1/\sigma}$.

For the primes $p \le p_0$, we avoid logarithms and 
compute $I(p^\sigma,x)$ directly from
the hypergeometric series of Proposition~\ref{prop4}.

To summarize, the algorithm for computing $\psi_\sigma(x)$
with absolute error $\Orden(\varepsilon)$,  for $x \in \R$,
is as follows.

\pagebreak[3]
\samepage{
\subsubsection*{Algorithm for the characteristic function $\psi_\sigma(x)$}

\begin{enumerate}
\item
${\displaystyle p_0 \leftarrow \lceil |4x|^{1/\sigma} \rceil.}$
\item \label{step2}
${\displaystyle
A \leftarrow \prod_{p \le p_0} \left( 1+\sum_{n=1}^{N}
 \frac{1}{p^{2n\sigma}n!^2}\prod_{j=0}^{n-1}(j^2-(x/2)^2)\right)\,,
}$ 
where $N$ is sufficiently large that the error in truncating the
sum is $\Orden(\varepsilon)$. [Here $A$ is the product over primes $\le p_0$.]
\item \label{step3}
${\displaystyle
B \leftarrow \exp\left(-\sum_{n=1}^{N'} \frac{Q_n(x/2)}{n!^2}\left\{
 P(2n\sigma) - \sum_{p \le p_0} p^{-2n\sigma}\right\}\right)\,,
}$
where $N'$ is sufficiently large that the error in truncating the  
sum is $\Orden(\varepsilon)$, and $Q_n(x/2)$ is evaluated using
the recurrence~\eqref{eq:Q_rec}.
[Here $B$ is the product over primes $> p_0$.]
\item return $A\times B$.
\end{enumerate}
} 

\pagebreak[3]
\subsubsection*{Remarks on the algorithm for $\psi_\sigma(x)$}
\begin{enumerate}
\item
At step~\ref{step3}, $P(2n\sigma)$ can be evaluated
using equation~\eqref{eq:Ps_Mobius}; time can be saved by precomputing the
required values $\zeta(rs)$.
\item
It is assumed that the computation is performed in floating-point arithmetic 
with sufficiently high precision and exponent range~\cite[Ch.~3]{BZ}.
For efficiency the precision should be varied dynamically as required, 
for example, to compensate for cancellation when summing the 
hypergeometric series at step~\ref{step2}, or when computing
the term $\{P(2n\sigma) - \sum_{p \le p_0} p^{-2n\sigma}\}$ 
at step~\ref{step3}.
\item
At step~\ref{step3} an alternative is to evaluate $Q_n(x/2)$ using a
table of coefficients
$q_{n,k}$; these can be computed in advance using 
the recurrence of Proposition~\ref{prop:qnk}. This saves
time (especially if many evaluations of $\psi_\sigma(x)$ at different
points $x$ are required, as is the case when evaluating $d(\sigma)$), 
at the expense of space and the requirement
to estimate $N'$ in advance.
\item
The algorithm runs in polynomial time, in the sense that the number
of bit-operations required to compute $\psi_\sigma(x)$ with
absolute error $O(\varepsilon)$ is bounded by a polynomial
(depending on $\sigma$ and $x$) in $\log(1/\varepsilon)$.
\end{enumerate}

\section{Evaluation of $d(\sigma)$ and $d_{-}(\sigma)$} \label{sec:dsigma}

In this section we show how the densities $d(\sigma)$ and $d_{-}(\sigma)$
of \S\ref{sec:arg} can be expressed in terms of the characteristic
function $\psi_\sigma$.

\samepage{
\begin{proposition}
For $\sigma>1$, the support of the measure $\mu_\sigma$ 
of \S\ref{sec:arg}
is contained in the compact 
interval $[-L(\sigma),\, L(\sigma)]$, where
\[
L(\sigma):=\sum_p\arcsin(p^{-\sigma}).
\]
\end{proposition}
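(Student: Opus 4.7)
The plan is to work directly with the random-variable representation of $\mu_\sigma$. Recall from the Proposition in \S\ref{sec:BJ} that $S=-\sum_p\log(1-p^{-\sigma}z_p)$ almost everywhere on $\Omega$, so that, taking imaginary parts term-by-term, $\Im S=-\sum_p\arg(1-p^{-\sigma}z_p)$. Since $\mu_\sigma$ is the distribution of $\Im S$, it suffices to show that $|\Im S|\le L(\sigma)$ almost surely, which will force the support of $\mu_\sigma$ to lie in $[-L(\sigma),L(\sigma)]$.

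First I would bound each term. With $b=p^\sigma>1$, the values $1-b^{-1}e^{i\theta}$ trace the circle of radius $1/b$ centred at $1$; since $1/b<1$, this circle is contained in the right half-plane and its tangent lines from the origin make angles $\pm\arcsin(1/b)$ with the positive real axis. Equivalently, one may verify analytically from \eqref{trig} that the function $\theta\mapsto\arctan(\sin\theta/(b-\cos\theta))$ attains its extrema at $\theta=\arccos(1/b)$ and $\theta=-\arccos(1/b)$, with extremal values $\pm\arcsin(1/b)$. Either way,
\[
|\arg(1-p^{-\sigma}z_p)|\le \arcsin(p^{-\sigma})\quad\text{for every }z_p\in\T.
\]

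Next I would invoke $\sigma>1$: since $\arcsin(p^{-\sigma})\sim p^{-\sigma}$ as $p\to\infty$, the series $L(\sigma)=\sum_p\arcsin(p^{-\sigma})$ converges. Hence the series $\sum_p\arg(1-p^{-\sigma}z_p)$ converges absolutely and uniformly in $\omega\in\Omega$, and in particular
\[
|\Im S(\omega)|\le \sum_p|\arg(1-p^{-\sigma}z_p(\omega))|\le \sum_p\arcsin(p^{-\sigma})=L(\sigma)
\]
for every $\omega$ (so the bound in fact holds pointwise, not merely almost surely). Therefore $\Prob\{\Im S\in[-L(\sigma),L(\sigma)]\}=1$, so the support of $\mu_\sigma$ is contained in $[-L(\sigma),L(\sigma)]$, as desired.

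The only mildly delicate point is the pointwise termwise extraction of imaginary parts: one must know that $\sum_p(-\log(1-p^{-\sigma}z_p))$ converges (almost surely, by the Proposition in \S\ref{sec:BJ}) and that the corresponding series of imaginary parts $-\sum_p\arg(1-p^{-\sigma}z_p)$ also converges. For $\sigma>1$ this is immediate from the uniform bound above by $\arcsin(p^{-\sigma})$. No subtlety with convergence of conditionally convergent series arises precisely because we are assuming $\sigma>1$ rather than the weaker $\sigma>\tfrac12$; this is what makes the compact-support statement fail to extend below $\sigma=1$ by the present argument.
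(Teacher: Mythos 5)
Your proof is correct and follows essentially the same route as the paper's: both identify $\Im S$ as a sum of terms $-\arg(1-p^{-\sigma}z_p)$, bound each term by $\arcsin(p^{-\sigma})$ (the paper cites \eqref{trig} and the extremum at $t=\arccos(1/b)$, which is your tangent-line computation), and sum to get $L(\sigma)$. Your version merely adds explicit justification for the termwise extraction of imaginary parts and for absolute convergence when $\sigma>1$, details the paper leaves implicit.
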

} 

\pagebreak[3]
\begin{proof}
Recall that $\mu_\sigma$ is the distribution of the random variable
$\Im S$ considered in~\S\ref{sec:BJ}. From~\eqref{trig},
$\Im S$ is equal to the sum of terms 
$-\arctan({(\sin t)}/({p^\sigma-\cos t}))$
whose\linebreak 
values are contained in the interval 
$[-\arcsin p^{-\sigma},\arcsin p^{-\sigma}]$. 
Therefore the range of $\Im S$ is contained in the interval 
$[-L(\sigma),\, L(\sigma)]$. 
\hspace*{\fill}\qed
\end{proof}
\begin{remark} It may be shown that the support 
of $\mu_\sigma$ is exactly the interval $[L(\sigma),L(\sigma)]$.
\end{remark}
\begin{remark}
As in van de Lune~\cite{L}, we define $\sigma_0$ to be the (unique)
real root in $(1,+\infty)$ of the equation $L(\sigma) = \pi/2$,
and $\sigma_1$ to be the real root in $(1,+\infty)$ of\linebreak
$L(\sigma) = 3\pi/2$.  These constants are relevant
in~\S\ref{sec:numerics}.
\end{remark}

\comment{
\begin{proposition}
The measure $\mu_\sigma$ has a density $\rho_\sigma(x)$ with respect
to Lebesgue measure.  The function $\rho_\sigma(x)$  and
the characteristic function $\psi_\sigma(x)$ are in $L^1(\R)\cap L^2(\R)$. 
Hence $\rho_\sigma$ is a continuous function.
\end{proposition}

\begin{proof}
This follows from standard arguments.  The details are omitted.
\hspace*{\fill}\qed 
\end{proof}
} 

\pagebrk
\begin{samepage}
\begin{proposition}\label{prop:d-integral}
For $\sigma>\frac12$,
\begin{equation}\label{eq:d_integral}
d(\sigma) = 1 - \frac{2}{\pi}\int_0^\infty 
\psi_\sigma(x)\sin\left(\frac{\pi x}{2}\right)\,\frac{dx}{x}\,.
\end{equation}
\end{proposition}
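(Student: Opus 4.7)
My starting point is the observation already recorded in \S\ref{sec:arg}: $d(\sigma) = 1-\mu_\sigma([-\pi/2,\pi/2])$, where $\mu_\sigma$ is the distribution of the random variable $\Im S$ whose characteristic function is $\psi_\sigma$. So the task reduces to proving
\[
\mu_\sigma([-\pi/2,\pi/2]) = \frac{2}{\pi}\int_0^\infty \psi_\sigma(x)\sin\left(\frac{\pi x}{2}\right)\frac{dx}{x}\,,
\]
which is a straight application of the classical Fourier/L\'evy inversion formula.

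My first step would be to invoke the L\'evy inversion formula (see e.g.\ any standard probability text) which asserts that whenever $a<b$ are continuity points of the distribution function of a random variable with characteristic function $\psi$,
\[
\mu([a,b]) = \lim_{T\to\infty}\frac{1}{2\pi}\int_{-T}^T \frac{e^{-iax}-e^{-ibx}}{ix}\,\psi(x)\,dx.
\]
Bohr--Jessen's result, recalled in \S\ref{sec:BJ}, guarantees that $\Prob_\sigma$ (hence its marginal $\mu_\sigma$) is absolutely continuous with respect to Lebesgue measure, so \emph{every} real number is a continuity point of the distribution function and the formula applies with $a=-\pi/2$, $b=\pi/2$. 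Substituting these endpoints, $e^{i\pi x/2}-e^{-i\pi x/2}=2i\sin(\pi x/2)$ gives
\[
\mu_\sigma([-\pi/2,\pi/2]) = \lim_{T\to\infty}\frac{1}{\pi}\int_{-T}^T \frac{\sin(\pi x/2)}{x}\,\psi_\sigma(x)\,dx.
\]
Then I would use that $\psi_\sigma$ is an even function of $x$ (noted at the start of \S\ref{sec:I}), and that $\sin(\pi x/2)/x$ is also even, to fold the integral onto $[0,\infty)$, producing the factor $2/\pi$ in the target formula.

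The only remaining point is to upgrade the symmetric-limit expression to a genuinely convergent improper integral. Here the rapid decay of $\psi_\sigma(x)$ provided by Theorem~\ref{thm:psi_bound} does the work: since $|\psi_\sigma(x)|$ decays faster than any negative power of $x$ as $x\to\infty$, while $\sin(\pi x/2)/x$ is bounded near $0$ (with value $\pi/2$ at the origin), the integrand $\psi_\sigma(x)\sin(\pi x/2)/x$ is in $L^1([0,\infty))$. Consequently the limit and the integral may be taken in either order, and the formula in the statement follows.

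The main (small) obstacle is simply the justification that the L\'evy inversion formula applies at the specific endpoints $\pm\pi/2$; but once we cite the Bohr--Jessen absolute continuity of $\mu_\sigma$, this is automatic, and the rest is elementary manipulation combined with the decay bound of Theorem~\ref{thm:psi_bound}.
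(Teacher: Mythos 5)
Your proposal is correct and follows essentially the same route as the paper: reduce to $1-d(\sigma)=\mu_\sigma([-\pi/2,\pi/2])$, apply the L\'evy inversion formula to the characteristic function $\psi_\sigma$, and use evenness to fold the integral onto $[0,\infty)$. The extra details you supply (absolute continuity of $\mu_\sigma$ to justify the endpoints, and Theorem~\ref{thm:psi_bound} to upgrade the symmetric limit to an absolutely convergent improper integral) are points the paper leaves implicit, and they are handled correctly.
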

\end{samepage}
\begin{proof}
Recall from \S\ref{sec:arg} that 
$1 - d(\sigma) = \mu_\sigma([-\pi/2,\pi/2])$.
Since $\psi_\sigma$ is the characteristic function associated to
the distribution $\mu_\sigma$, a standard result\footnote{Attributed
to Paul L\'evy.} 
in probability theory gives
\[
1 - d(\sigma) = \frac{1}{2\pi}\lim_{X \to \infty}
 \int_{-X}^X \frac{\exp(ix\pi/2)-\exp(-ix\pi/2)}{ix}\psi_\sigma(x)\,dx\,.
\]
Since $\psi_\sigma(x)$ is an even function, 
we obtain~\eqref{eq:d_integral}.
\hspace*{\fill}\qed
\end{proof}

To evaluate $d(\sigma)$ numerically from~\eqref{eq:d_integral}, we have
to perform a numerical integration. The following theorem shows that the
integral may be replaced by a rapidly-converging sum if $\sigma > 1$.
\begin{theorem}		\label{thm:d-exact}
Let $\sigma>1$ and $\ell > \max(\pi/2, L(\sigma))$. Then we have
\begin{equation}\label{eq:l_form}
d(\sigma)=1-\frac{\pi}{2\ell}-\frac{2}{\pi}
\sum_{n=1}^\infty \frac{1}{n}\psi_\sigma\Bigl(\frac{\pi n}{\ell}
\Bigr)\sin\Bigl(\frac{n\pi^2}{2\ell}\Bigr)\,.
\end{equation}
\end{theorem}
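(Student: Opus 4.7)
\textbf{Proof proposal for Theorem~\ref{thm:d-exact}.}

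The plan is to recognize the right-hand side of \eqref{eq:l_form} as the Fourier series on the interval $[-\ell,\ell]$ of the function $\chi := \chi_{[-\pi/2,\pi/2]}$, integrated against the measure $\mu_\sigma$. The hypothesis $\ell > L(\sigma)$ is what makes this work: by the preceding proposition, $\mu_\sigma$ is supported in $[-L(\sigma),L(\sigma)] \subset (-\ell,\ell)$, so integrating a $2\ell$-periodic trigonometric polynomial against $\mu_\sigma$ gives the same answer as integrating its restriction to $[-\ell,\ell]$. Also, $\pi/2 < \ell$, so the set $[-\pi/2,\pi/2]$ lies inside one period.

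First I would rewrite $1-d(\sigma) = \mu_\sigma([-\pi/2,\pi/2]) = \int \chi\, d\mu_\sigma$. A direct computation of the Fourier coefficients on $[-\ell,\ell]$ gives
\[
c_0 = \frac{1}{2\ell}\int_{-\pi/2}^{\pi/2} dy = \frac{\pi}{2\ell},
\qquad
c_n = \frac{1}{2\ell}\int_{-\pi/2}^{\pi/2} e^{-i\pi ny/\ell}\, dy = \frac{\sin(n\pi^2/(2\ell))}{\pi n} \quad (n \ne 0).
\]
Note $c_{-n}=c_n$, matching the evenness of $\psi_\sigma$. For any trigonometric polynomial $g(y)=\sum_{|n|\le N} a_n e^{i\pi ny/\ell}$ we have
\[
\int g\, d\mu_\sigma = \sum_{|n|\le N} a_n \psi_\sigma(n\pi/\ell),
\]
by linearity and the definition of the characteristic function. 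Applying this with the Fejér means $\sigma_N\chi$ of $\chi$ would yield, formally, the identity \eqref{eq:l_form} in the limit $N\to\infty$.

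The main obstacle is passing to the limit, because $\chi$ is discontinuous at $\pm \pi/2$. I would handle this in two steps. On the left-hand side, the Fejér means $\sigma_N\chi$ are uniformly bounded and converge to $\chi$ pointwise except at $\pm \pi/2$; since $\mu_\sigma$ is absolutely continuous with respect to Lebesgue measure (by the Bohr--Jessen theorem cited in \S\ref{sec:BJ}), the set $\{\pm\pi/2\}$ is $\mu_\sigma$-null, so dominated convergence gives $\int \sigma_N\chi\, d\mu_\sigma \to \int \chi\, d\mu_\sigma$. On the right-hand side, the Fejér coefficients $(1-|n|/(N+1))c_n$ converge to $c_n$ for each fixed $n$, and are uniformly bounded by $|c_n|$; moreover, by Theorem~\ref{thm:psi_bound}, $\psi_\sigma(n\pi/\ell)$ decays faster than any power of $1/n$, so $\sum_n |c_n \psi_\sigma(n\pi/\ell)|$ converges. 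Dominated convergence for series then justifies the passage to the limit.

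Combining both limits yields
\[
1-d(\sigma) = \frac{\pi}{2\ell} + \sum_{n\ne 0} \frac{\sin(n\pi^2/(2\ell))}{\pi n}\,\psi_\sigma(n\pi/\ell)
= \frac{\pi}{2\ell} + \frac{2}{\pi}\sum_{n=1}^\infty \frac{1}{n}\psi_\sigma\!\left(\frac{n\pi}{\ell}\right)\sin\!\left(\frac{n\pi^2}{2\ell}\right),
\]
which rearranges to \eqref{eq:l_form}. The hypothesis $\ell > \pi/2$ is used to ensure $c_0 = \pi/(2\ell)$ reflects the full measure of $[-\pi/2,\pi/2]$ on $[-\ell,\ell]$, while $\ell > L(\sigma)$ ensures that the Fourier expansion integrated against $\mu_\sigma$ correctly computes $\int \chi\, d\mu_\sigma$ rather than a sum over periodic translates.
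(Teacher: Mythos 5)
Your proof is correct, and it is the natural dual of the paper's argument rather than a reproduction of it. The paper expands the \emph{density} $\rho_\sigma$ (compactly supported in $[-L(\sigma),L(\sigma)]\subset(-\ell,\ell)$ because $\sigma>1$) into its Fourier series on $[-\ell,\ell]$, observes that the coefficients are $\tfrac{1}{2\ell}\psi_\sigma(\pi n/\ell)$ precisely because the periodization does not disturb the support, and then integrates the series term by term against the indicator of $[-\pi/2,\pi/2]$. You instead expand the \emph{indicator} of $[-\pi/2,\pi/2]$ on $[-\ell,\ell]$ and pair it with the measure $\mu_\sigma$; the two hypotheses on $\ell$ play exactly the roles you assign them, and the resulting bilinear pairing $\sum_n c_n\psi_\sigma(\pi n/\ell)$ is the same in both versions (it is one Parseval identity read from either side). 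What your route buys is rigour at the two points the paper leaves implicit: you never need the existence or regularity of the density $\rho_\sigma$ beyond the fact that $\mu_\sigma\{\pm\pi/2\}=0$ (which follows from the absolute continuity of $\Prob_\sigma$ quoted in \S\ref{sec:BJ}), and the termwise integration is replaced by a genuine limit argument via Fej\'er means together with the absolute convergence of $\sum_n|c_n\psi_\sigma(\pi n/\ell)|$ supplied by Theorem~\ref{thm:psi_bound}. What the paper's route buys is brevity and the transparent interpretation of \eqref{eq:l_form} as an exact Riemann sum for the integral of Proposition~\ref{prop:d-integral}. One cosmetic remark: the decay from Theorem~\ref{thm:psi_bound} is more than you need --- the bound $|c_n|=\Orden(1/n)$ together with any summable bound on $\psi_\sigma(\pi n/\ell)$ would do, but since the stated theorem gives super-polynomial decay for every fixed $\sigma>\tfrac12$, your appeal to it is perfectly adequate.
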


\begin{proof}
Consider the function $\widetilde{\rho}(x)$ equal to $\rho_\sigma(x)$ in 
the interval
$[-\ell,\ell]$.
Now extend 
$\widetilde{\rho}(x)$ to the real line $\R$, making it periodic 
with period $2\ell$.
Thus
\[
\widetilde{\rho}(x)=\sum_{n\in\Z} f_n 
\exp\left({\frac{\pi i n x}{\ell}}\right),
\quad\text{where}\quad
f_n=\frac{1}{2\ell}\int_{-\ell}^\ell\widetilde{\rho}(x)
\exp\left({-\frac{\pi i n x}{\ell}}\right)\,dx.
\]
Now $\widetilde{\rho}(x)=\rho_\sigma(x)$ for $|x|\le \ell$ and 
$\rho_\sigma(x)=0$ for $|x|>\ell$. 
Therefore
\[
f_n=\frac{1}{2\ell}\int_{-\ell}^\ell\rho_\sigma(x)
\exp\left(\!{-\frac{\pi i n x}{\ell}}\!\right)\,dx
=\frac{1}{2\ell}\int_{\R}\rho_\sigma(x)
\exp\left(\!{-\frac{\pi i n x}{\ell}}\!\right)\,dx=
\frac{1}{2\ell}\psi_\sigma\Bigl(\frac{\pi n}{\ell}\Bigr)\,.
\]

\pagebreak[3]
\noindent Since $\psi_\sigma(x)$ is an even function,
\begin{equation}\label{eq:rhotilde}
\widetilde{\rho}(x)=\frac{1}{2\ell}\sum_{n\in\Z}
\psi_\sigma\Bigl(\frac{\pi n}{\ell}\Bigr)
\exp\left({\frac{\pi i n x}{\ell}}\right)=
\frac{1}{2\ell}+
\frac{1}{\ell}\sum_{n=1}^\infty
\psi_\sigma\Bigl(\frac{\pi n}{\ell}\Bigr)\cos\frac{\pi n x}{\ell}\,\raisedot
\end{equation}
Now
$d(\sigma)=1-\mu_\sigma([-\pi/2,\pi/2])=
1-\int_{-\pi/2}^{\pi/2}\rho_\sigma(t)\,dt$.
Since $\pi/2\le \ell$, we may replace $\rho_\sigma(t)$ by
$\widetilde{\rho}(t)$ in the integral. 
Hence, multiplying the equality~\eqref{eq:rhotilde}
by the characteristic function of $[-\pi/2,\pi/2]$ and integrating, we get
\eqref{eq:l_form}.
\hspace*{\fill}\qed
\end{proof}
\pagebreak[3]
\begin{remark}
The sum in~\eqref{eq:l_form} can be seen as a numerical quadrature to approximate
the integral in~\eqref{eq:d_integral}, taking a Riemann sum with stepsize
$h = \pi/\ell$.  However, we emphasise that~\eqref{eq:l_form} is {\em exact}
under the conditions stated in Theorem~\ref{thm:d-exact}. This is a
consequence of the measure $\mu_\sigma$ having finite support when
$\sigma > 1$.  If $\sigma \in (\frac12, 1]$ then $\mu_\sigma$ no longer
has finite support and~\eqref{eq:l_form} only gives an approximation;
however, this approximation converges rapidly to the exact result
as $\ell \to \infty$,
because $\mu_\sigma$ is well-approximated by measures with finite support.
\end{remark}
\begin{remark}
If we take $m := 4\ell/\pi$ in the Theorem~\ref{thm:d-exact}, we get the
slightly simpler form 
\begin{equation}\label{eq:m_form}
d(\sigma) = 1 - \frac{2}{m} - 
        \frac{2}{\pi}\sum_{n=1}^\infty
\frac{1}{n}\psi_\sigma\left(\frac{4n}{m}\right)\sin\left(\frac{2\pi
n}{m}\right)
\end{equation}
for $m > \max (2, M(\sigma))$, where $M(\sigma) = 4L(\sigma)/\pi$.
A good choice if $L(\sigma) < \pi$ is $m = 4$; 
then only the odd terms in the sum~\eqref{eq:m_form} contribute.
\end{remark}

\subsubsection*{Computation of $d_{-}(\sigma)$}

Recall that $d_{-}(\sigma)$ is the probability that
$\Re\zeta(\sigma+it) < 0$.
Let $a_k = a_k(\sigma)$ be the probability that
$|\arg\zeta(\sigma+it)| > (2k+1)\pi/2$, that is
\[
a_k := 1 - \mu_\sigma([-(k+{\textstyle\frac12})\pi,\,
	(k+{\textstyle\frac12})\pi]).
\]
Then
\begin{equation}\label{eq:dminus}
d_{-}(\sigma) = \sum_{k=0}^\infty (a_{2k}-a_{2k+1})
= \sum_{k=0}^\infty (-1)^k a_k\,.
\end{equation}
We have seen that, for $\sigma > 1$ and $m > \max(2,4L(\sigma)/\pi)$,
eqn.~\eqref{eq:m_form} gives $a_0 = d(\sigma)$.
Similarly, under the same conditions we have
\begin{equation}\label{eq:ak}
a_k = 1 - \frac{4k+2}{m} - \frac{2}{\pi}\sum_{n=1}^\infty
 \frac{1}{n}\psi_\sigma\left(\frac{4n}{m}\right)
  \sin\left(\frac{(4k+2)\pi n}{m}\right).
\end{equation}
Using \eqref{eq:dminus} and \eqref{eq:ak} in conjuction with an
algorithm for the computation of $\psi_\sigma$, we can compute
$d_{-}(\sigma)$ and also, of course, $d_{+}(\sigma) = 1 - d_{-}(\sigma)$.
If $\sigma \in (\frac12, 1]$ then we can take the limit of~\eqref{eq:ak}
as $m \to \infty$, or use an analogue of
Proposition~\ref{prop:d-integral}, to evaluate the constants $a_k$.

\pagebreak[3]
\section{Numerical results} \label{sec:numerics}

In~\cite{rpb246} we described a computation of the first fifty intervals 
($t > 0$) on which $\Re\zeta(1+it)$ takes negative values. The first such
interval occurs for $t \approx 682112.9$, and has length $\approx 0.05$.
From the lengths of the first fifty intervals we estimated that\linebreak
$d_{-}(1) \approx 3.85\times 10^{-7}$. We also mentioned a Monte Carlo
computation which gave $d_{-}(1) \approx 3.80\times 10^{-7}$. 
The correct value is $3.7886\ldots\times 10^{-7}$.
The difficulty of improving the accuracy of these computations or of
extending them to other values of $\sigma$ was one motivation for the
analytic approach of the present paper.

The algorithm of \S\ref{sec:psi_algorithm} was implemented independently by
two of us, using in one case Mathematica and in the other Magma. The
Mathematica implementation precomputes a table of coefficients $q_{n,k}$;
the Magma implementation uses the recurrence for the polynomials $Q_n$
directly. The results obtained by both implementations are in agreement,
and also agree (up to the expected statistical error)
with results obtained by the Monte Carlo method 
in the region $0.6 \le \sigma \le 1.1$ where the latter method 
is feasible. 

Table~2 
gives some computed values of $d(\sigma)$
for $\sigma \in (0.5,1.165]$.  From van de Lune~\cite{L} we know
that $d(\sigma) = d_{-}(\sigma) = 0$ for $\sigma \ge \sigma_0 \approx
1.19234$. Table~2 shows that $d(\sigma)$ is very small for
$\sigma$ close to $\sigma_0$. For example, $d(\sigma) < 10^{-100}$
for $\sigma \ge 1.15$. The small size of $d(\sigma)$
makes the computation difficult for $\sigma \ge 1.15$.
We need to compute
$\psi_\sigma(4n/m)$ to more than 100 decimal places
to compensate for cancellation in the sum~\eqref{eq:m_form}, in order to get
any significant figures in $d(\sigma)$.

\begin{table}[t]
\caption{$d(\sigma)$ for various $\sigma \in (0.5, 1.165]$}
\begin{center}
\begin{tabular}{|l|c|}
\hline
$\;\;\;\;\;\;\;\;\sigma$ & $d(\sigma)$\\
\hline
$0.5+10^{-11}$ & $0.6533592249148917497 \phantom{\times 10^{-00\;\;}}$\\
$0.5+10^{-5}$  & $0.4962734204446697434 \phantom{\times 10^{-00\;\;}}$\\
$0.6    $      & $7.9202919267432753125 \times 10^{-2\;\;}$\\
$0.7    $      & $2.5228782796068962969 \times 10^{-2\;\;}$\\
$0.8    $      & $5.1401888600187247641 \times 10^{-3\;\;}$\\
$0.9    $      & $3.1401743610642112427 \times 10^{-4\;\;}$\\
$1.0    $      & $3.7886623606688718671 \times 10^{-7\;\;}$\\
$1.1    $      & $6.3088749952505014038 \times 10^{-22\;}$\\
$1.15   $      & $1.3815328080907034247 \times 10^{-103}$\\
$1.16   $      & $1.1172074815779368125 \times 10^{-194}$\\
$1.165  $      & $1.2798207752318534603 \times 10^{-283}$\\
\hline
\end{tabular}
\end{center}
\end{table} 

\pagebreak[3]
Selberg~\cite{Selberg1} (see also~\cite{Joyner,Kuhn,Tsang})
showed that, for $t \sim {\rm unif}(T,2T)$,
\begin{equation}\label{eq:Selberg_normal_dist}
\frac{\log\zeta(1/2+it)}{\sqrt{\frac{1}{2}\log\log T}} \indist X+iY
\end{equation}
as $T \rightarrow \infty$, with $X,Y \sim N(0,1)$.
This implies that $d(1/2)=1$, but gives no indication of
the speed of convergence of $d(\sigma)$ as $\sigma \downarrow \frac12$.
Table~2 shows that convergence is
very slow~-- for $\sigma - \frac12 \ge 10^{-11}$ we have
$d(\sigma) < \frac23$. 

It appears from numerical computations that $\Re\zeta(1/2+it)$ is
``usually positive'' for those values of $t$ for which computation is
feasible. This is illustrated by several of the Figures in~\cite{X-ray}.
Because the function $\sqrt{\log\log T}$ grows so slowly,
the region that is feasible for computation may not show the typical
behaviour of $\zeta(\sigma+it)$ for large $t$ on or close to the critical
line $\sigma = \frac12$.

Table~3 
gives the difference $d(\sigma) - d_{-}(\sigma)$.
For $\sigma > 0.8$, there is no appreciable difference between
$d(\sigma)$ and $d_{-}(\sigma)$.  This is because the probability 
that\linebreak
$|\arg\zeta(\sigma+it)| > 3\pi/2$ is very small in this region.
Indeed, $d(\sigma) = d_{-}(\sigma)$ for all 
$\sigma \ge \sigma_1 \approx 1.0068$, 
where $\sigma_1$ is the positive real root of 
$L(\sigma) = 3\pi/2$.

There is an appreciable difference between $d(\sigma)$ and $d_{-}(\sigma)$
very close to the critical line.
For example,
$d_{-}(0.5+10^{-11}) \approx 0.4986058426$,
but $d(0.5+10^{-11}) \approx 0.6533592249$.
Our numerical results suggest that
$\lim_{\sigma \downarrow 1/2}d_{-}(\sigma) = 1/2$.

It is plausible that $d_{-}(\frac12) = d_{+}(\frac12) = \frac12$,
but Selberg's result \eqref{eq:Selberg_normal_dist}
does not seem to be strong enough to imply this.

\begin{table}[t] 
\caption{The difference $d(\sigma) - d_{-}(\sigma)$}
\begin{center}
\begin{tabular}{|c|c|}
\hline
$\sigma$ & $d(\sigma)-d_{-}(\sigma)$\\
\hline
$0.5+10^{-11}$ & $0.1547533823\;\;\;$\\
$0.6$ & $8.073328981 \times 10^{-11}$\\
$0.7$ & $2.676004882 \times 10^{-32}$\\ 
$0.8$ & $7.655052120 \times 10^{-210}$\\
\hline
\end{tabular}
\end{center}
\vspace*{-20pt}
\end{table} 

\section{Conclusion}	\label{sec:conclusion}

We have shown a precise sense in which $\Re\zeta(s)$ is ``usually positive'' 
in the half-plane $\sigma = \Re(s) > \frac12$, 
given an explicit expression for the characteristic function 
$\psi_\sigma$, and given a feasible algorithm for the
accurate computation of $\psi_\sigma$, and consequently for the
computation of the densities $d(\sigma)$ and $d_{-}(\sigma)$.

Our results could be generalised to cover Dirichlet L-functions because
the character $\chi(p)$ in
the Euler product
\[
L(s,\chi) = \prod_p (1 - \chi(p)p^{-s})^{-1}
\]
can be absorbed into the random variable~$z_p$ whenever $|\chi(p)| = 1$.
Thus, it would only be necessary to omit, from sums/products over primes,
all primes $p$ for which $\chi(p)$ is zero, i.e.~the finite number of 
primes that divide the modulus of the L-function.
This would, of course, change the numerical results.
Nevertheless,  we
\hbox{expect} $\Re L(s,\chi)$ to be ``usually positive'' 
for $\Re(s) > \frac12$.

\pagebreak[3]

\end{document}